\documentclass[11pt,reqno]{amsart}
\usepackage{amsmath,amstext,amssymb,amscd,hyperref}
\usepackage{verbatim}
\usepackage{enumerate}
\usepackage{mathrsfs}
\usepackage[dvipsnames,usenames]{xcolor}

\usepackage{amsthm}

\newtheorem{theorem}{Theorem}[section]
\newtheorem{lemma}[theorem]{Lemma}

\theoremstyle{definition}
\newtheorem{definition}[theorem]{Definition}

\theoremstyle{remark}
\newtheorem{remark}[theorem]{Remark}

\numberwithin{equation}{section}

\setlength{\oddsidemargin}{.25in} \setlength{\evensidemargin}{.25in}
\setlength{\textwidth}{6in}

\def\bar{\overline}

\author[C. Cao]{Chongsheng Cao}
\address{Department of Mathematics \& Statistics \\Florida International University\\
Miami, Florida 33199, USA} \email{caoc@fiu.edu}

\author[Y. Guo]{Yanqiu Guo}
\address{Department of Mathematics \& Statistics \\Florida International University\\
Miami, Florida 33199, USA} \email{yanguo@fiu.edu}

\author[E. S. Titi]{Edriss S. Titi}
\address{Department of
Mathematics \\ Texas A\&M University\\
College Station, TX 77843, USA \textbf{AND} Department of Computer Science and Applied Mathematics \\ Weizmann Institute of Science \\ Rehovot 7610001 Israel}
 \email{titi@math.tamu.edu, edriss.titi@weizmann.ac.il}

\title[Regularity for a rapidly rotating constrained convection model]
{Global regularity for a rapidly rotating constrained convection model of tall columnar structure with weak dissipation}
\date{October 8, 2018}
\keywords{Rayleigh-B\'enard convection, cyclonic and anticyclonic structure,
incompressible, rapidly rotating tall columnar structure, global regularity}
\subjclass[2010]{35A01, 35A02, 35Q35, 35K40}

\begin{document}

\maketitle

\begin{abstract}
We study a three-dimensional fluid model describing rapidly rotating convection that takes place in tall columnar structures.
The purpose of this model is to investigate the cyclonic and anticyclonic coherent structures. Global existence, uniqueness, continuous dependence on initial data, and large-time behavior of strong solutions are shown provided the model is regularized by a weak dissipation term.
\end{abstract}

\section {Introduction} \label{S1}
\subsection{The model}
For the purpose of investigating the cyclonic and anticyclonic coherent structure in the Rayleigh-B\'enard convection under the influence of a rapid rotation, Sprague et al. \cite{SJKW} (see also Julien et al. \cite{JKMW,JK}) introduced and simulated the following asymptotically reduced system for rotationally constrained convection that takes place in a tall column:
\begin{align}
&\frac{\partial w}{\partial t} + \mathbf u \cdot \nabla_h w - \frac{\partial \phi}{\partial z} = \Gamma \theta + \frac{1}{Re} \Delta_h w,   \label{mo-1} \\
&\frac{\partial \omega}{\partial t} + \mathbf u \cdot \nabla_h \omega - \frac{\partial w}{\partial z} = \frac{1}{Re} \Delta_h \omega ,   \label{mo-2}\\
&\frac{\partial \theta}{\partial t}  + \mathbf u \cdot \nabla_h \theta +  w \bar{w\theta} = \frac{1}{Pe} \Delta_h \theta,   \label{mo-3}\\
&\nabla_h \cdot \mathbf u=0.   \label{mo-4}
\end{align}
In the above system, the velocity vector field $(u,v,w)^{tr}$ is defined in a three-dimensional periodic domain $\Omega= [0,L]^2 \times [0,1]$,
where $\mathbf u=(u,v)^{tr}$ denotes the horizontal component of the velocity vector field. The unknown $\theta$ represents the fluctuation of the temperature such that the horizontal spatial mean $\bar \theta(z)=0$, for every $z\in [0,1]$. For a function $f$ defined in $\Omega$, the notation $\bar f$ stands for the horizontal mean
\begin{align*}
\bar f(z) = \frac{1}{L^2} \int_{[0,L]^2} f(x,y,z) dx dy,  \;\;  \text{for}   \;\;  z\in [0,1].
\end{align*}
We denote the horizontal gradient by $\nabla_h=(\frac{\partial}{\partial x}, \frac{\partial}{\partial y})$ and denote the horizontal Laplacian by
$\Delta_h=\frac{\partial^2}{\partial x^2}+ \frac{\partial^2}{\partial y^2}$. The unknown $\omega=\nabla_h \times \mathbf u=\partial_x v - \partial_y u$
represents the vertical component of the vorticity. As usual, the horizontal stream function $\phi$ is defined as
$\phi= (-\Delta_h)^{-1} \omega$, with $\bar \phi=0.$ Also, a few dimensionless numbers appear in the model. Specifically, $Re$ is the Reynolds number, $\Gamma$ is the buoyancy number, and $Pe$ is the P\'eclet number.

System (\ref{mo-1})-(\ref{mo-4}) is an asymptotically reduced model derived from three-dimensional Boussinesq equations governing buoyancy-driven rotational flow in tall columnar structures, by assuming that the ratio of the depth of the fluid layer to the horizontal scale is large, and the angular velocity is fast.

The global regularity for system (\ref{mo-1})-(\ref{mo-4}) is unknown.
The main difficulty of analyzing (\ref{mo-1})-(\ref{mo-4}) lies in the fact that the physical domain is three-dimensional, whereas the regularizing viscosity acts only on the horizontal variables, and the equations contain troublesome terms $\frac{\partial \phi}{\partial z}$ and $\frac{\partial w}{\partial z}$ involving the derivative in the vertical direction.

In this work, we regularize the convection model (\ref{mo-1})-(\ref{mo-4}) by imposing a very weak vertical dissipation term $\epsilon^2 \frac{\partial^2\phi}{\partial z^2}$ to the vorticity equation (\ref{mo-2}), namely, we consider the regularized system
\begin{align}
&\frac{\partial w}{\partial t} + \mathbf u \cdot \nabla_h w - \frac{\partial \phi}{\partial z} = \Gamma \theta + \frac{1}{Re} \Delta_h w,   \label{model-1} \\
&\frac{\partial \omega}{\partial t} + \mathbf u \cdot \nabla_h \omega - \frac{\partial w}{\partial z} = \frac{1}{Re} \Delta_h \omega +  \epsilon^2 \frac{\partial^2\phi}{\partial z^2},   \label{model-2}\\
&\frac{\partial \theta}{\partial t} + \mathbf u \cdot \nabla_h \theta +  w \bar{w\theta} = \frac{1}{Pe} \Delta_h \theta,   \label{model-3}\\
&\nabla_h \cdot \mathbf u=0.   \label{model-4}
\end{align}
The main goal of this paper is to prove the global regularity of system (\ref{model-1})-(\ref{model-4}). We remark that, as a dissipation,
$\epsilon^2 \frac{\partial^2\phi}{\partial z^2}$ is much weaker than the vertical viscosity
$\epsilon^2 \frac{\partial^2\omega}{\partial z^2}$, since $\omega= -\Delta_h \phi$. The purpose of introducing and analyzing (\ref{model-1})-(\ref{model-4}) is to shed some light on the global regularity problem for the 3D rotationally constrained convection model (\ref{mo-1})-(\ref{mo-4}), a subject of future investigation. Notably, there is no physical meaning for the dissipation term $\epsilon^2 \frac{\partial^2\phi}{\partial z^2}$, however, it can be viewed as a numerical dissipation.

In order to prove the existence of strong solutions for (\ref{mo-1})-(\ref{mo-4}), we introduce a ``Galerkin-like" approximation scheme. In fact, the Galerkin-like system consists of a system of ODEs coupled with a PDE, and it is set up in the format of an iteration. This special Galerkin scheme represents a ``novelty" of the paper.

It is worth mentioning that the three-dimensional Hasagawa-Mima equations \cite{HM, HM2}, describing plasma turbulence, share a comparable structure with the convection model (\ref{mo-1})-(\ref{mo-4}). Although the well-posedness problem for the 3D inviscid Hasagawa-Mima equations is still unsolved, in a recent work \cite{CGT} we established the global well-posedness of strong solutions for a Hasegawa-Mima model with partial dissipation. Also, Cao et al. \cite{CFT} showed the global well-posedness for an inviscid pseudo-Hasegawa-Mima model in three dimensions.

The paper is organized as follows. For the rest of section \ref{S1}, we introduce suitable function spaces for solutions and provide some identities related to the nonlinearities of model (\ref{model-1})-(\ref{model-4}). Then we state the main results, namely, the existence, uniqueness, continuous dependence on initial data, and large-time behavior of strong solutions to (\ref{model-1})-(\ref{model-4}). Section \ref{S2} features some inequalities which are essential for our analysis.
In section \ref{sec-exist}, we prove the existence of strong solutions by using a Galerkin-like approximation method. In section \ref{sec-unique}, we justify the uniqueness of strong solutions and the continuous dependence on initial data. Finally, we study the large-time behavior of solutions in section \ref{sec-large}.

\vspace{0.1 in}

\subsection{Preliminaries}
Let $\Omega=[0,L]^2 \times [0,1]$ be a three-dimensional fundamental periodic domain. The standard $L^p(\Omega)$ norm for periodic functions is denoted by $\|f\|_p=\left(\int_{\Omega} |f|^p dx dy dz\right)^{\frac{1}{p}}$, $p\geq 1$. As usual, the $L^2(\Omega)$ inner product of real-valued periodic functions $f$ and $g$ is defined by
$(f,g)=\int_{\Omega} fg dx dy dz$. Also $H^s(\Omega)$, $s\geq 0$, denotes the standard Sobolev spaces for periodic functions. In addition, we define a Hilbert space
\begin{align*}
H^1_h(\Omega)= \left\{f\in L^2(\Omega):   \int_{\Omega} |\nabla_h f|^2 dx dy dz <\infty \right\},
\end{align*}
endowed with an inner product $(f,g)_{H^1_h(\Omega)}  =   \int_{\Omega} \nabla_h f  \cdot  \nabla_h g dx dy dz $.

Let $f\in H^1_h(\Omega)$ with zero horizontal mean, i.e. $\bar f=0$, then the Poincar\'e inequality holds:
\begin{align}   \label{poin}
\|f\|_2^2 \leq \gamma \|\nabla_h f\|_2^2, \;\;  \text{where}    \;\;   \gamma= L^2/(4\pi^2).
\end{align}

For sufficiently smooth periodic functions $\mathbf u$, $f$ and $g$, such that $\nabla_h \cdot \mathbf{u}=0$, an integration by parts shows
\begin{align}   \label{iden-0}
\left(\mathbf{u} \cdot \nabla_h f,g\right)=- \left(\mathbf{u} \cdot \nabla_h g,f\right)
\end{align}
This implies
\begin{align}   \label{iden-1}
\left(\mathbf{u} \cdot \nabla_h f,f\right)=0.
\end{align}
Note that the horizontal velocity $\mathbf u$, the vertical vorticity $\omega$, and the horizontal stream function $\phi$ have the following relations:
\begin{align}   \label{iden-4}
\omega=\nabla_h \times \mathbf u=v_x-u_y, \;\;\; \omega=-\Delta_h \phi,\;\;\;  \mathbf u = (\phi_y,-\phi_x)^{tr}.
\end{align}
It follows that
\begin{align}  \label{iden-3}
(\omega,\phi)=\|\mathbf u\|_2^2.
\end{align}
Also, $\|\omega\|_2  \approx \|\nabla_h \mathbf u\|_2$. In addition, by (\ref{iden-0}) and (\ref{iden-4}), we have
\begin{align}   \label{iden-2}
\left(\mathbf u \cdot \nabla_h  f, \phi\right) = -\left(\mathbf u \cdot \nabla_h  \phi, f\right)=0,
\end{align}
for sufficiently regular functions $\mathbf u$, $\phi$ and $f$ such that $\mathbf u = (\phi_y,-\phi_x)^{tr}$.

We remark that, since $\nabla_h \cdot \mathbf u =0$ and $\omega =   \nabla_h \times \mathbf u = v_x  -  u_y$, then $u=(-\Delta_h)^{-1} \omega_y$ and $v= \Delta_h^{-1} \omega_x$, if $\bar{\mathbf u}=0$. Thus, the horizontal velocity $\mathbf u$ and the vertical component $\omega$ of the vorticity determine each other uniquely, provided $\nabla_h \cdot \mathbf u =0$ and $\bar{\mathbf u}=0$.

\vspace{0.1 in}

\subsection{Main results}
Before stating our main results, we shall give a precise definition of strong solutions for system (\ref{model-1})-(\ref{model-4}).
Let us first introduce a suitable function space for strong solutions to (\ref{model-1})-(\ref{model-4}).
Specifically, we define the following space of periodic functions:
\begin{align}   \label{def-V}
V = \big\{(\mathbf u,w,\theta)^{tr} \in (H^1(\Omega))^4:  \nabla_h \cdot \mathbf u =0, \;\;  \bar{\mathbf u}=0,  \;\; \bar w=\bar \theta=0 \big\}.
\end{align}

\begin{definition}  \label{def-sol}
Let $T>0$. Assume $(\mathbf u_0, w_0, \theta_0)^{tr} \in V$ and $\omega_0 =   \nabla_h \times \mathbf u_0 \in L^2(\Omega)$. We call $(\mathbf u, w, \theta)^{tr}\in V$
with $\omega \in L^2(\Omega)$ a strong solution for system (\ref{model-1})-(\ref{model-4})
on $[0,T]$ if
\begin{enumerate}[(i)]
\item  $\mathbf u$, $w$, $\theta$ and $\omega$ have the following regularity:
\begin{align}   \label{main-2}
\begin{cases}
\mathbf u, w, \theta\in L^{\infty}(0,T;H^1(\Omega))\cap C([0,T];L^2(\Omega));\\
\omega \in L^{\infty}(0,T; L^2(\Omega)) \cap  C([0,T]; (H^1_h(\Omega))'  )        ;\\
\nabla_h \omega, \Delta_h w, \Delta_h \theta \in L^2(\Omega \times (0,T));   \\
\omega_z, \nabla_h w_z, \nabla_h \theta_z, \phi_{zz} \in L^2(\Omega \times (0,T));   \\
 \mathbf u_t, w_t, \theta_t \in L^2(\Omega \times (0,T));  \\
 \omega_t \in   L^2(0,T;(H^1_h(\Omega))').
 \end{cases}
\end{align}

\item equations (\ref{model-1})-(\ref{model-3}) hold in the following function spaces respectively:
\begin{align}     \label{main-1}
\begin{cases}
w_t + \mathbf u \cdot \nabla_h w - \phi_z= \Gamma \theta + \frac{1}{Re} \Delta_h w,     \;\;\text{in}\;\; L^2(\Omega \times (0,T));\\
\omega_t + \mathbf u \cdot \nabla_h \omega - w_z = \frac{1}{Re} \Delta_h \omega  + \epsilon^2  \phi_{zz} ,
\;\;\text{in}\;\; L^2(0,T;(H^1_h(\Omega))');\\
\theta_t  + \mathbf u \cdot \nabla_h \theta +  w \bar{w\theta} = \frac{1}{Pe} \Delta_h \theta,   \;\; \text{in} \;\; L^2(\Omega \times (0,T)),
\end{cases}
\end{align}
such that $\nabla_h \cdot \mathbf u=0$, $\omega = \nabla_h \times \mathbf u = - \Delta_h \phi$, with $\bar \phi=0$.

\item $\mathbf u(0)=\mathbf u_0$, $w(0)=w_0$, $\omega(0)=\omega_0$, $\theta(0)=\theta_0$.

\end{enumerate}
\end{definition}

\vspace{0.02 in}

Now we are ready to state the main results of the manuscript. Our first theorem is concerned with the global existence and uniqueness of strong solutions as well as the continuous dependence on initial data.
\begin{theorem}  \label{main}
Let $T>0$. Assume initial data $(\mathbf u_0, w_0, \theta_0)^{tr} \in V$ and $\omega_0 = \nabla_h \times \mathbf u_0 \in L^2(\Omega)$. Then system (\ref{model-1})-(\ref{model-4}) admits a unique strong solution $(\mathbf u, w, \theta)\in V$ with $\omega \in L^2(\Omega)$ on $[0,T]$ in the sense of Definition \ref{def-sol}. Moreover, the energy identity is valid for every $t\in [0,T]$:
\begin{align}    \label{energy}
&\frac{1}{2}\left(\|w(t)\|_2^2 + \|\mathbf u(t)\|_2^2 +  \|\theta(t)\|_2^2 \right) + \int_0^t \|\bar{w\theta}\|_2^2 ds \notag\\
&+\int_0^t \left[  \frac{1}{Re}    \left(\|\nabla_h w\|_2^2 + \|\nabla_h \mathbf u\|_2^2 \right) + \frac{1}{Pe} \|\nabla_h \theta\|_2^2 + \epsilon^2 \|\phi_z\|_2^2 \right]ds\notag\\
& =\frac{1}{2}\left(\|w_0\|_2^2 + \|\mathbf u_0\|_2^2 +  \|\theta_0\|_2^2 \right) + \Gamma \int_0^t (\theta, w) ds.
\end{align}
Also, if $(\mathbf u_0^n,w_0^n, \theta_0^n)^{tr}$ is a bounded sequence of initial data in $V$ and $\omega_0^n =  \nabla_h \times \mathbf u_0^n$ is a bounded sequence in $L^2(\Omega)$ such that $(\mathbf u_0^n,w_0^n,\theta_0^n)^{tr}$ converges to $(\mathbf u_0,w_0,\theta_0)^{tr}$ with respect to the $L^2(\Omega)$-norm
and $\omega_0^n$ converges to $\omega_0$ in $(H^1_h(\Omega))'$, then the corresponding strong solution $(\mathbf u^n,w^n,\theta^n)^{tr}$ converges to $(\mathbf u,w,\theta)^{tr}$ in $C([0,T];(L^2(\Omega))^4)$, and $\omega^n$ converges to $\omega$ in $C([0,T];(H^1_h(\Omega))')$.
\end{theorem}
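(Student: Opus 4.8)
The plan is to prove Theorem \ref{main} via a Galerkin-like approximation scheme, a priori estimates, compactness, and then uniqueness/continuous dependence as a separate argument. I would proceed as follows.

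\emph{Step 1: Approximation scheme.} As announced in the introduction, I would set up a ``Galerkin-like'' iteration that decouples the ODE part (a truncated spectral expansion in the horizontal variables, keeping the full vertical dependence) from the temperature PDE. Concretely, at each iteration step $n$ one solves a finite-dimensional system of ODEs (in time, with $z$ as a parameter — this is why it is ``ODEs coupled with a PDE'') for $(\mathbf u^{(n)}, w^{(n)}, \omega^{(n)})$ using the previous iterate's temperature, and then solves the linear parabolic equation \eqref{model-3} for $\theta^{(n)}$ using the new velocity. Local-in-time solvability of each iteration step is standard (Picard for the ODEs, linear parabolic theory for $\theta$); the key is that the a priori bounds below are uniform in $n$ and in the truncation parameter, so the approximate solutions exist on all of $[0,T]$.

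\emph{Step 2: A priori estimates.} This is the technical heart. First, the basic energy estimate: test \eqref{model-1} with $w$, \eqref{model-2} with $\phi$ (using $(\omega,\phi)=\|\mathbf u\|_2^2$ from \eqref{iden-3} and $(\mathbf u\cdot\nabla_h f,\phi)=0$ from \eqref{iden-2}), and \eqref{model-3} with $\theta$; the terms $-(\phi_z,w)$ and $-(w_z,\phi)$ cancel after integration by parts in $z$, the term $(w\bar{w\theta},\theta)$ produces $\|\bar{w\theta}\|_2^2$, and one is left with \eqref{energy} modulo the buoyancy term $\Gamma(\theta,w)$, which is absorbed by Cauchy--Schwarz, Poincar\'e \eqref{poin}, and Gr\"onwall. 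This gives $\mathbf u, w, \theta \in L^\infty(0,T;L^2)$ and $\nabla_h w, \nabla_h\mathbf u = \nabla_h\mathbf u, \nabla_h\theta, \phi_z \in L^2(\Omega\times(0,T))$. Next, the $H^1_h$-level estimate: test \eqref{model-1} with $-\Delta_h w$, \eqref{model-2} with $\omega$ (note $(\epsilon^2\phi_{zz},\omega) = (\epsilon^2\phi_{zz}, -\Delta_h\phi) = \epsilon^2\|\nabla_h\phi_z\|_2^2 \geq 0$, giving the $\omega_z$-type control once combined appropriately — actually one gets $\|\nabla_h\phi_z\|_2 = \|\omega_z \cdot(\text{lower order})\|$, and together with the $w_z$ coupling this is where the $\epsilon^2$ dissipation is essential), and \eqref{model-3} with $-\Delta_h\theta$. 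The dangerous terms are the vertical-derivative couplings $(\phi_z, -\Delta_h w) = -(\nabla_h\phi_z,\nabla_h w)$ and $(w_z,\omega)$; one integrates by parts in $z$ to move the $z$-derivative and then uses the $\epsilon^2\|\nabla_h\phi_z\|^2$ and $\tfrac{1}{Re}\|\nabla_h\omega\|^2 \approx \tfrac{1}{Re}\|\Delta_h\phi\|^2$ dissipation to absorb them (this forces, and explains, the weak dissipation). The nonlinear terms $(\mathbf u\cdot\nabla_h w, \Delta_h w)$ etc.\ are handled by the two-dimensional-type (horizontal) interpolation/Ladenberg--Sobolev inequalities from Section \ref{S2} — this is where the ``tall columnar'' / anisotropic structure pays off: the nonlinearity only involves horizontal derivatives, so one can integrate by parts in $z$ freely and use 2D Gagliardo--Nirenberg in $(x,y)$ slice-by-slice, then integrate in $z$. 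Combining everything and applying Gr\"onwall yields all the bounds in \eqref{main-2}, uniformly in the approximation parameters; the bounds on $\mathbf u_t, w_t, \theta_t$ and $\omega_t$ follow by comparison from the equations. I expect this anisotropic $H^1_h$ estimate, and in particular controlling the $\phi_z$/$w_z$ coupling using only $\epsilon^2\phi_{zz}$, to be the main obstacle.

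\emph{Step 3: Passage to the limit.} From the uniform bounds, extract weakly / weak-$\ast$ convergent subsequences; the time-derivative bounds plus Aubin--Lions give strong convergence in, say, $C([0,T];L^2)$ for $\mathbf u, w, \theta$ (and in $C([0,T];(H^1_h)')$ for $\omega$), which suffices to pass to the limit in the nonlinear terms $\mathbf u\cdot\nabla_h w$, $\mathbf u\cdot\nabla_h\omega$, $\mathbf u\cdot\nabla_h\theta$, $w\bar{w\theta}$, as well as in the iteration (showing the iterates converge to a common limit). This produces a strong solution in the sense of Definition \ref{def-sol}; the energy identity \eqref{energy} (as an equality, not just inequality) follows because the regularity in \eqref{main-2} is high enough to justify all the integrations by parts rigorously for the limit solution, or alternatively by lower-semicontinuity plus the reverse inequality obtained from testing the limit equations.

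\emph{Step 4: Uniqueness and continuous dependence.} Given two solutions with data as in the theorem, set $\delta\mathbf u = \mathbf u^1 - \mathbf u^2$, etc., subtract the equations, and perform the \emph{basic} energy estimate on the difference (test the $\delta w$-equation with $\delta w$, the $\delta\omega$-equation with $\delta\phi$, the $\delta\theta$-equation with $\delta\theta$). The transport-term differences are bilinear in the difference and the solutions; using \eqref{iden-1}, \eqref{iden-2} and the $L^\infty(0,T;H^1)$ bounds on the solutions together with horizontal Sobolev embeddings, each such term is bounded by (something integrable in $t$) times $\|(\delta\mathbf u,\delta w,\delta\theta)\|_2^2$ plus a small multiple of the dissipation. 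The term $w^1\bar{w^1\theta^1} - w^2\bar{w^2\theta^2}$ is handled by splitting and using that $\bar{\cdot}$ is an $L^2$-orthogonal projection (Cauchy--Schwarz). Gr\"onwall then gives both uniqueness (when the data agree) and the asserted continuous dependence $\|(\delta\mathbf u,\delta w,\delta\theta)(t)\|_2^2 + \|\delta\omega(t)\|_{(H^1_h)'}^2 \lesssim$ initial difference, hence convergence in $C([0,T];L^2)$ and $C([0,T];(H^1_h)')$ respectively. I do not expect Step 4 to be hard, since the difference estimate is at the energy level and the needed bounds on the solutions are exactly those established in Step 2.
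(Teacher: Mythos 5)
Your overall route (Galerkin-like iteration, a priori estimates, Aubin--Lions compactness, and an $L^2$ energy estimate on differences for uniqueness and continuous dependence) coincides with the paper's, and your Steps 3 and 4 are essentially what the paper does. However, there are two genuine gaps in Steps 1--2. First, your approximation scheme is not well defined as stated: a spectral truncation ``in the horizontal variables only, with $z$ as a parameter'' does not produce a finite-dimensional ODE system, and $z$ cannot be treated as a parameter because the velocity equations couple different levels through $\partial_z\phi$ and $\partial_z w$. In the paper the velocity unknowns $(\mathbf u_m,w_m,\omega_m)$ are truncated in \emph{all three} Fourier directions (so \eqref{Gl-1}--\eqref{Gl-4} really are ODEs), while the temperature is deliberately \emph{not} truncated; the iteration feeds $\theta^{(m-1)}$ into the level-$m$ velocity ODEs and then solves the linear PDE \eqref{Gl-5} for $\theta^{(m)}$. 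The reason $\theta$ must remain a PDE --- which you do not identify --- is that closing the $\|\nabla_h\theta\|_2^2$ estimate against the nonlocal term $w\,\bar{w\theta}$ requires the slice-wise bound on $\|\bar{|\theta^{(m)}|^2}\|_{L^{\infty}}$, obtained by multiplying the temperature equation by $\theta^{(m)}$ and integrating only over the horizontal variables (see \eqref{theta-inff} and \eqref{H1ta-f1}); this pointwise-in-$z$ information is not available for a spectrally truncated temperature.

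Second, and more seriously, you never address the estimate for $\partial_z\theta$, which is the hardest part of the existence proof and is needed both for \eqref{main-2} and for your own Step 4 (which uses $\|\partial_z\theta_1\|_2\in L^\infty_t$). Differentiating the temperature equation in $z$ produces $\partial_z w\,\bar{w\theta}+w\,\bar{(\partial_z w)\theta}+w\,\bar{w\,\partial_z\theta}$, which the paper controls slice-by-slice in $z$ with the ad hoc Gronwall-type Lemma \ref{lemma-3}. Worse, because the scheme is an iteration, the resulting bound at level $m$ contains $\int_0^T\|\partial_z\theta^{(m-1)}\|_2^2\,ds$ from the previous level (see \eqref{H1z-11}), so a naive Gronwall argument does not yield a bound uniform in $m$: the paper needs a separate induction over time subintervals of length $t_0=1/(2e^{\tilde C})$ to show that $\int_0^T\|\partial_z\theta^{(m)}\|_2^2\,ds$ is bounded independently of $m$. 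Your phrase ``combining everything and applying Gr\"onwall yields all the bounds in \eqref{main-2}'' skips exactly this obstruction. By contrast, the $\phi_z$/$w_z$ coupling that you single out as the main obstacle is dispatched in two lines by integrating by parts and absorbing into the $\epsilon^2\|\mathbf u_z\|_2^2$ dissipation, as in \eqref{omega-1}--\eqref{omega-3}.
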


The next result is concerned with the large-time behavior of strong solutions for system (\ref{model-1})-(\ref{model-4}).
It shows the energy decays to zero exponentially in time. Also, the $L^2$-norm of $\omega$ as well as the $L^2$-norm of the horizontal gradient of $w$ and $\theta$ approach zero exponentially fast.
However, the $L^2$-norm of the vertical derivative of $(\mathbf u, w,\theta)^{tr}$ grows at most exponentially in time. Recall we have defined $\gamma= L^2/(4\pi^2)$ in (\ref{poin}).
\begin{theorem}     \label{thm-decay}
Let $\kappa \geq 1$ such that $Pe \not = 2\kappa Re$.
Assume $(\mathbf u, w, \theta)^{tr}\in V$ with $\omega \in L^2(\Omega)$ is a global strong solution for system  (\ref{model-1})-(\ref{model-4}) in the sense of Definition \ref{def-sol}.
Then, for all $t \geq 0$,
\begin{align}
&\|\theta(t)\|_2^2  \leq e^{-\frac{2}{\gamma Pe}t} \|\theta_0\|_2^2,     \label{thm2-00}   \\
&\|\mathbf u(t)\|_2^2 + \|w(t)\|_2^2   \leq e^{-\frac{1}{\kappa \gamma Re}t}  \left(  \|\mathbf u_0\|_2^2 +  \|w_0\|_2^2  \right)
+C   \left( e^{-\frac{2}{\gamma Pe}t}   +  e^{-\frac{1}{\kappa \gamma Re}t} \right)\|\theta_0\|_2^2.     \label{thm2-01}
\end{align}
In addition, for large $t$,
\begin{align}  \label{thm2-1}
&\|\omega(t)\|_2^2 + \|\nabla_h w(t)\|_2^2 + \|\nabla_h \theta(t)\|_2^2    \notag\\
&\leq \left(e^{-\frac{2}{\gamma Pe}t} + e^{-\frac{1}{\kappa \gamma Re}t} \right)  C(\|\mathbf u_0\|_2,  \|w_0\|_2,  \|\theta_0\|_2, \|\partial_z \theta_0\|).
\end{align}
Moreover, for all $t\geq 0$,
\begin{align}    \label{thm2-2}
 \|  \mathbf u_z(t)\|_2^2 + \|w_z(t)\|_2^2  +  \|  \theta_z(t)\|_2^2  \leq C(\|\mathbf u_0\|_{H^1}, \|w_0\|_{H^1}, \|\theta_0\|_{H^1})
 e^{C\left(\|\theta_0\|_2^2+  \|\partial_z \theta_0\|_2^2  +1 \right)t}.
\end{align}
\end{theorem}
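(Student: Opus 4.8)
The plan is to establish the four bounds in the order stated, each using its predecessors; all the estimates are \emph{a priori} and may be carried out rigorously on the Galerkin-like approximations of Section \ref{sec-exist} (the horizontal ones can also be done directly on the solution, whose regularity in Definition \ref{def-sol} suffices). For (\ref{thm2-00}) I would take the $L^2(\Omega)$ inner product of (\ref{model-3}) with $\theta$: the transport term drops by (\ref{iden-1}), and since $\bar{w\theta}$ depends only on $z$ one checks $(w\bar{w\theta},\theta)=\|\bar{w\theta}\|_2^2\ge0$, while $(\Delta_h\theta,\theta)=-\|\nabla_h\theta\|_2^2$; with $\bar\theta=0$ the Poincar\'e inequality (\ref{poin}) gives $\frac{d}{dt}\|\theta\|_2^2+\frac{2}{\gamma Pe}\|\theta\|_2^2\le0$, and (\ref{thm2-00}) follows by Gr\"onwall. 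For (\ref{thm2-01}) I would pair (\ref{model-1}) with $w$ and (\ref{model-2}) with $\phi$ and add; the transport terms vanish by (\ref{iden-1}) and (\ref{iden-2}), the two vertical-coupling contributions sum to zero since $\int_\Omega\partial_z(\phi w)=0$, and $(\Delta_h\omega,\phi)=-\|\omega\|_2^2$, $(\phi_{zz},\phi)=-\|\phi_z\|_2^2$, $\|\nabla_h\mathbf u\|_2\approx\|\omega\|_2$. With $E:=\|w\|_2^2+\|\mathbf u\|_2^2$ this yields $\frac12\frac{d}{dt}E+\frac1{Re}(\|\nabla_h w\|_2^2+\|\omega\|_2^2)+\epsilon^2\|\phi_z\|_2^2=\Gamma(\theta,w)$; estimating $\Gamma(\theta,w)$ by Young's inequality with a weight tuned so that the retained dissipation rate equals exactly $\frac1{\kappa\gamma Re}$ (the weight being positive precisely because $\kappa\ge1$), and using (\ref{poin}) on $w$ and $\mathbf u$, gives $\frac{d}{dt}E+\frac1{\kappa\gamma Re}E\le C\|\theta_0\|_2^2e^{-\frac{2}{\gamma Pe}t}$ by (\ref{thm2-00}). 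Integrating this scalar linear inequality with the integrating factor and using $Pe\ne2\kappa Re$ (so the two exponential rates differ and the resulting Duhamel integral is a clean combination of them) gives (\ref{thm2-01}).

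For (\ref{thm2-1}) I would move up one derivative: pair (\ref{model-1}) with $-\Delta_h w$, (\ref{model-2}) with $\omega$, (\ref{model-3}) with $-\Delta_h\theta$, and add. Now $(\mathbf u\cdot\nabla_h\omega,\omega)=0$ by (\ref{iden-1}); the vertical-coupling terms cancel once more, since $(\phi_z,\Delta_h w)=-(\omega_z,w)$ and $(\omega_z,w)+(w_z,\omega)=0$; and $\epsilon^2(\phi_{zz},\omega)=-\epsilon^2\|\nabla_h\phi_z\|_2^2\le0$. Writing $Y:=\|\omega\|_2^2+\|\nabla_h w\|_2^2+\|\nabla_h\theta\|_2^2$, one is left with
\begin{align*}
&\tfrac12\tfrac{d}{dt}Y+\tfrac1{Re}\left(\|\Delta_h w\|_2^2+\|\nabla_h\omega\|_2^2\right)+\tfrac1{Pe}\|\Delta_h\theta\|_2^2+\epsilon^2\|\nabla_h\phi_z\|_2^2 \\
&\qquad=(\mathbf u\cdot\nabla_h w,\Delta_h w)+(\mathbf u\cdot\nabla_h\theta,\Delta_h\theta)+(w\bar{w\theta},\Delta_h\theta)-\Gamma(\theta,\Delta_h w).
\end{align*}
The linear term is $\le\frac1{2Re}\|\Delta_h w\|_2^2+C\|\theta\|_2^2$ with $\|\theta\|_2^2$ decaying by (\ref{thm2-00}); the trilinear terms I would control with the anisotropic Sobolev--Ladyzhenskaya inequalities of Section \ref{S2}, absorbing the top-order factors $\|\Delta_h w\|_2^2,\|\Delta_h\theta\|_2^2,\|\nabla_h\omega\|_2^2$ into the dissipation, so that what remains is $\le\Phi(t)\,Y+g(t)$ with $\Phi(t)$ built from lower-order norms of the unknowns (decaying by (\ref{thm2-00})--(\ref{thm2-01}), or lying in $L^1_t$ by the energy identity (\ref{energy})) and $g(t)$ collecting the exponentially decaying linear forcing together with the convection contribution, estimated through (\ref{thm2-00})--(\ref{thm2-01}) and $\|\theta_z\|_2$. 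Since $\int_0^\infty Y\,dt<\infty$ by (\ref{energy}) and (\ref{thm2-00})--(\ref{thm2-01}), a uniform Gr\"onwall argument shows $Y$ is eventually trapped in a bounded set; once $\Phi(t)$ has fallen below the dissipation rate the inequality becomes $\frac{d}{dt}Y+cY\le g(t)$ with $g$ decaying at one of the two rates, and integrating yields (\ref{thm2-1}).

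For (\ref{thm2-2}) I would differentiate (\ref{model-1})--(\ref{model-3}) in $z$ and pair with $w_z$, $\phi_z$, $\theta_z$ respectively, then add. Again the transport term $(\mathbf u\cdot\nabla_h w_z,w_z)$ and its analogues in the other two equations vanish, the vertical-coupling terms $-(\phi_{zz},w_z)$ and $-(w_{zz},\phi_z)$ cancel, $\frac1{Re}(\Delta_h\omega_z,\phi_z)=-\frac1{Re}\|\omega_z\|_2^2$, and $\epsilon^2(\phi_{zzz},\phi_z)=-\epsilon^2\|\phi_{zz}\|_2^2$. With $Z:=\|\mathbf u_z\|_2^2+\|w_z\|_2^2+\|\theta_z\|_2^2$, the anisotropic inequalities of Section \ref{S2} together with the already-established (\ref{thm2-00})--(\ref{thm2-1}) yield $\frac{d}{dt}Z+(\text{horizontal dissipation})\le\Psi(t)\,Z+(\text{exponentially small})$, where $\Psi(t)$ is majorised by $C\big(\|\theta_0\|_2^2+\|\partial_z\theta_0\|_2^2+1\big)$; Gr\"onwall's lemma then produces the at-most-exponential growth (\ref{thm2-2}).

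The step I expect to be the main obstacle is (\ref{thm2-1}). The trilinear terms cannot be closed by purely horizontal estimates, so the anisotropic inequalities inevitably bring in vertical derivatives of the unknowns; the bookkeeping must be arranged so that the coefficient $\Phi(t)$ multiplying $Y$ decays (or at least is integrable in time) — this is exactly what turns the differential inequality into genuine exponential decay via an absorbing-set argument — and so that the only vertical derivative that cannot be eliminated is $\theta_z$, which is what accounts for the dependence on $\|\partial_z\theta_0\|$ in the constant, keeping the exponentially growing $w_z$ and $\mathbf u_z$ out of the leading-order terms.
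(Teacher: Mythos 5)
Your treatment of (\ref{thm2-00}), (\ref{thm2-01}) and (\ref{thm2-2}) follows the paper's proof essentially verbatim: $L^2$ pairing plus Poincar\'e for $\theta$, the $(w,\phi)$ energy balance with an integrating factor and the non-resonance condition $Pe\neq 2\kappa Re$ for (\ref{thm2-01}), and the $z$-differentiated estimates of Section \ref{sec-z} for (\ref{thm2-2}). For (\ref{thm2-1}) you take a genuinely different route: you lump $\|\omega\|_2^2+\|\nabla_h w\|_2^2+\|\nabla_h\theta\|_2^2$ into a single functional $Y$ and exploit the cancellation $(w_z,\omega)+(\omega_z,w)=0$, whereas the paper proceeds in a cascade --- first $\omega$ (estimating $(w_z,\omega)$ by Cauchy--Schwarz against the $\epsilon^2\|\mathbf u_z\|_2^2$ dissipation, see (\ref{omega-la})), then $\nabla_h w$ via (\ref{nablaw-la}), then $\nabla_h\theta$ via (\ref{H1ta-00}) --- applying the uniform Gronwall Lemma \ref{gronwall} over unit time intervals at each stage and feeding the decay of $\int_t^{t+1}\|\nabla_h w\|_2^2$, $\int_t^{t+1}\|\mathbf u_z\|_2^2$, $\int_t^{t+1}\|\Delta_h w\|_2^2$ forward (see (\ref{large-6}), (\ref{large-8}), (\ref{large-10})). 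Your combined functional is viable and in one respect tidier (the coupling terms cancel exactly), but it forces you to absorb the $\|\Delta_h w\|_2^2$ forcing produced by the term $(w\bar{w\theta},\Delta_h\theta)$ into the $w$-dissipation, which the cascade avoids.

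The one step that fails as written is your closing mechanism: \emph{``once $\Phi(t)$ has fallen below the dissipation rate the inequality becomes $\frac{d}{dt}Y+cY\le g(t)$.''} The coefficient $\Phi(t)$ produced by Lemma \ref{lemma1} necessarily contains $\|\omega\|_2^2\bigl(\|\mathbf u\|_2^2+\|\mathbf u_z\|_2^2\bigr)$ and $\|\nabla_h\mathbf u\|_2^2\bigl(\|\mathbf u\|_2^2+\|\mathbf u_z\|_2^2\bigr)$, and the forcing $g(t)$ contains $\|\bar{w\theta}\|_2^2\,\|\bar{\theta^2}\|_{L^\infty}$. None of these is known to decay, or even to be bounded, pointwise in time: the only pointwise control on $\|\mathbf u_z(t)\|_2$ is the exponentially \emph{growing} bound (\ref{thm2-2}), and what is actually available is time-averaged control --- $\int_t^{t+1}\|\mathbf u_z\|_2^2\,ds$ and $\int_t^{t+1}\|\bar{w\theta}\|_2^2\,ds$ decay exponentially by (\ref{large-8}) and (\ref{large-2'}), while $\|\bar{\theta^2}\|_{L^\infty}$ is bounded via (\ref{theta-inf1}) and Lemma \ref{lemma2} in terms of $\|\theta_0\|_2$ and $\|\partial_z\theta_0\|_2$ (this, not the convection term, is where the $\|\partial_z\theta_0\|$ in the constant of (\ref{thm2-1}) originates). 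So there is no time after which $\Phi(t)\le c$ pointwise, and the linear ODE comparison cannot be invoked in that form. The gap is repairable in either of two ways: (i) do what the paper does and apply Lemma \ref{gronwall}, which only requires the unit-interval integrals of the coefficient and the forcing; or (ii) note that $\int_0^\infty\Phi\,dt<\infty$ by (\ref{energy}) and (\ref{lar-2}), so the Gronwall factor $\exp\bigl(\int_s^t\Phi\,d\tau\bigr)$ is uniformly bounded and the Duhamel integral $\int_0^t e^{-c(t-s)}g(s)\,ds$ can be estimated by summing the exponentially decaying unit-interval integrals of $g$. Either repair restores the exponential rate $e^{-\frac{2}{\gamma Pe}t}+e^{-\frac{1}{\kappa\gamma Re}t}$; as literally stated, your final step does not.
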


\vspace{0.1 in}

\section{Auxiliary inequalities}  \label{S2}
In this section, we provide some inequalities which are essential for analyzing model (\ref{model-1})-(\ref{model-4}). The first one is an anisotropic Ladyzhenskaya inequality which will be used repeatedly in this manuscript.
\begin{lemma}   \label{lemma1}
Let $f\in H^1(\Omega)$, $g\in H^1_h(\Omega)$ and $h\in L^2(\Omega)$. Then
\begin{align*}
\int_{\Omega} |fgh| dx dy dz  \leq C \left(\|f\|_2 + \|\nabla_h f\|_2 \right)^{\frac{1}{2}}  \left(\|f\|_2 + \|f_z\|_2 \right)^{\frac{1}{2}}
 \|g\|_2^{\frac{1}{2}} \left(\|g\|_2  + \|\nabla_h g\|_2 \right)^{\frac{1}{2}} \|h\|_2.
\end{align*}
\end{lemma}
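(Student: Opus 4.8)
The plan is to reduce the inequality to a slice‑wise application of the two‑dimensional Ladyzhenskaya inequality on $[0,L]^2$, combined with the one‑dimensional Sobolev embedding $H^1(0,1)\hookrightarrow L^\infty(0,1)$ in the vertical variable. For a fixed $z\in[0,1]$ write $\|\cdot\|_{L^p_{xy}}$ for the $L^p$‑norm over the horizontal cross‑section $[0,L]^2$, and recall that $\|u\|_{L^4_{xy}}^2\le C\|u\|_{L^2_{xy}}\big(\|u\|_{L^2_{xy}}+\|\nabla_h u\|_{L^2_{xy}}\big)$. First I would observe that, by the Cauchy--Schwarz inequality, $\int_\Omega|fgh|\,dx\,dy\,dz\le\|fg\|_2\,\|h\|_2$, so it suffices to bound $\|fg\|_2^2=\int_\Omega f^2g^2\,dx\,dy\,dz$. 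Freezing $z$ and using Hölder in $(x,y)$ gives $\int_{[0,L]^2}f^2g^2\,dx\,dy\le\|f(\cdot,\cdot,z)\|_{L^4_{xy}}^2\,\|g(\cdot,\cdot,z)\|_{L^4_{xy}}^2$, whence
\[
\|fg\|_2^2\le\Big(\sup_{z\in[0,1]}\|f(\cdot,\cdot,z)\|_{L^4_{xy}}^2\Big)\int_0^1\|g(\cdot,\cdot,z)\|_{L^4_{xy}}^2\,dz.
\]
The last integral is easy: applying the 2D Ladyzhenskaya inequality slice‑wise, then Cauchy--Schwarz in $z$ and Fubini, one obtains $\int_0^1\|g(\cdot,\cdot,z)\|_{L^4_{xy}}^2\,dz\le C\|g\|_2\big(\|g\|_2+\|\nabla_h g\|_2\big)$, which is exactly the $g$‑factor required.

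Thus everything comes down to the vertical estimate $\sup_{z}\|f(\cdot,\cdot,z)\|_{L^4_{xy}}^2\le C\big(\|f\|_2+\|\nabla_h f\|_2\big)\big(\|f\|_2+\|f_z\|_2\big)$, and this is the step I expect to be the main obstacle: since $f\in H^1(\Omega)$ controls neither $\nabla_h f_z$ nor $\|f\|_{L^\infty}$, one cannot simply freeze the slice. I would prove it by the fundamental theorem of calculus in $z$ applied to $\Phi(z):=\|f(\cdot,\cdot,z)\|_{L^4_{xy}}^4=\int_{[0,L]^2}|f|^4\,dx\,dy$. Choosing $z_0$ to minimize $\Phi$, one has $\Phi(z_0)\le\int_0^1\Phi(z)\,dz=\|f\|_4^4$, and since $\Phi'(z)=4\int_{[0,L]^2}|f|^2 f\,f_z\,dx\,dy$,
\[
\sup_{z\in[0,1]}\Phi(z)\le\|f\|_4^4+4\int_\Omega|f|^3|f_z|\,dx\,dy\,dz .
\]
Here $\|f\|_4^4=\int_0^1\|f(\cdot,\cdot,z)\|_{L^4_{xy}}^4\,dz$ is controlled slice‑wise by the 2D Ladyzhenskaya inequality together with the 1D bound $\sup_z\|f(\cdot,\cdot,z)\|_{L^2_{xy}}\le\|f\|_2+\|f_z\|_2$ (itself $H^1(0,1)\hookrightarrow L^\infty(0,1)$ applied to $z\mapsto\|f(\cdot,\cdot,z)\|_{L^2_{xy}}$, whose derivative is dominated by $\|f_z(\cdot,\cdot,z)\|_{L^2_{xy}}$), giving $\|f\|_4^4\le C\big(\|f\|_2+\|\nabla_h f\|_2\big)^2\big(\|f\|_2+\|f_z\|_2\big)^2$. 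For the second term I would use Hölder, $\int_\Omega|f|^3|f_z|\le\|f\|_6^3\,\|f_z\|_2$, followed by the three‑dimensional anisotropic Gagliardo--Nirenberg/Sobolev inequality $\|f\|_6^3\le C\big(\|f\|_2+\|\nabla_h f\|_2\big)^2\big(\|f\|_2+\|f_z\|_2\big)$, so that $\int_\Omega|f|^3|f_z|\le C\big(\|f\|_2+\|\nabla_h f\|_2\big)^2\big(\|f\|_2+\|f_z\|_2\big)^2$ (using $\|f_z\|_2\le\|f\|_2+\|f_z\|_2$). Taking square roots yields the vertical estimate, and combining with the first paragraph finishes the proof.

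The technical heart of the argument is therefore the anisotropic $L^6$ control of $\int_\Omega|f|^3|f_z|$: this is where the asymmetry of the hypotheses — $f$ in the full $H^1(\Omega)$, but $g$ only in $H^1_h(\Omega)$ and $h$ only in $L^2(\Omega)$ — is genuinely used, since no analogue of the vertical estimate is available with $g$ or $h$ placed in the distinguished slot (they carry no vertical regularity, and $h$ carries no horizontal regularity either). Every other step is a routine application of Hölder, Cauchy--Schwarz, Fubini, and the 2D Ladyzhenskaya inequality.
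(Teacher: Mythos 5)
Your argument is correct, but it cannot be checked against the paper's own proof for the simple reason that the paper does not prove Lemma \ref{lemma1}: it only cites \cite{CGT} (see also \cite{CT11}). Structurally your proof follows the natural route for such anisotropic trilinear estimates: slice-wise H\"older reduces everything to $\sup_{z}\|f(\cdot,\cdot,z)\|_{L^4_{xy}}$ and $\int_0^1\|g(\cdot,\cdot,z)\|_{L^4_{xy}}^2\,dz$; the $g$-factor is handled by the 2D Ladyzhenskaya inequality plus Cauchy--Schwarz in $z$; and the $f$-factor is handled by the fundamental theorem of calculus applied to $\Phi(z)=\int_{[0,L]^2}|f|^4\,dx\,dy$ starting from a minimizing $z_0$. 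All the exponents check out, and you correctly identify that the distinguished ($L^\infty_z$) slot must be occupied by $f$, the only function with vertical regularity. The one place where you lean on an unproved external result is the chain $\int_\Omega|f|^3|f_z|\le\|f\|_6^3\|f_z\|_2$ combined with the three-dimensional anisotropic Sobolev inequality $\|f\|_6^3\le C(\|f\|_2+\|\nabla_h f\|_2)^2(\|f\|_2+\|f_z\|_2)$. That inequality is true on the periodic box (it is the torus version of $\|f\|_6^3\le C\|\partial_xf\|_2\|\partial_yf\|_2\|\partial_zf\|_2$, obtained by the Loomis--Whitney/Gagliardo--Nirenberg argument applied to $|f|^4$), so your proof is valid as stated; but it is an ingredient of essentially the same depth as the lemma being proved, so invoking it wholesale is the weak point of the write-up. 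It can be avoided entirely within your framework: bound $\int_{[0,L]^2}|f|^3|f_z|\,dx\,dy\le\|f(\cdot,\cdot,z)\|_{L^6_{xy}}^3\|f_z(\cdot,\cdot,z)\|_{L^2_{xy}}$, use the two-dimensional Gagliardo--Nirenberg estimate $\|f\|_{L^6_{xy}}^3\le C\|f\|_{L^4_{xy}}^2\big(\|f\|_{L^4_{xy}}+\|\nabla_hf\|_{L^2_{xy}}\big)$, pull $\sup_z\|f(\cdot,\cdot,z)\|_{L^4_{xy}}^2=(\sup_z\Phi)^{1/2}$ out of the $z$-integral, and absorb it into the left-hand side by Young's inequality; then only two-dimensional inequalities and your one-dimensional vertical estimate are needed. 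Two further routine points should be made explicit: the differentiation of $\Phi$ and the pointwise evaluation of slices should be justified by first proving the inequality for smooth periodic functions and concluding by density, and the 2D Ladyzhenskaya inequality on $[0,L]^2$ must be used in the inhomogeneous form $\|u\|_{L^4_{xy}}^2\le C\|u\|_{L^2_{xy}}(\|u\|_{L^2_{xy}}+\|\nabla_hu\|_{L^2_{xy}})$, since no mean-zero condition is assumed.
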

We have proved Lemma \ref{lemma1} in \cite{CGT}. Also, a similar inequality can be found in \cite{CT11}.

The next inequality is derived from the Agmon's inequality.
\begin{lemma}    \label{lemma2}
Assume $f$, $f_z \in L^2(\Omega)$, then
\begin{align}   \label{ineq-2}
\sup_{z\in [0,1]} \int_{[0,L]^2} |f(x,y,z)|^2 dx dy \leq C\|f\|_2   \left(\|f\|_2+\|f_z\|_2\right).
\end{align}
\end{lemma}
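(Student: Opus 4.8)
The plan is to prove the anisotropic bound (\ref{ineq-2}) by reducing it to the one–dimensional Agmon inequality in the vertical variable. First I would fix $(x,y) \in [0,L]^2$ and regard $z \mapsto f(x,y,z)$ as a periodic $H^1$ function on $[0,1]$. For such a one–variable function the Agmon inequality gives
\begin{align*}
\sup_{z\in[0,1]} |f(x,y,z)|^2 \leq C \left(\int_0^1 |f(x,y,z)|^2 \, dz\right)^{\frac{1}{2}} \left(\int_0^1 |f(x,y,z)|^2 \, dz + \int_0^1 |f_z(x,y,z)|^2 \, dz\right)^{\frac{1}{2}},
\end{align*}
where the extra $\int_0^1 |f|^2\,dz$ inside the second factor accounts for the periodic (rather than Dirichlet) boundary condition, so that one controls $f$ even when $f_z$ is small. (Concretely, one writes $f(x,y,z)^2 = f(x,y,z_0)^2 + \int_{z_0}^{z} 2 f f_z \, d\zeta$ for a suitable $z_0$ realizing the average, bounds $f(x,y,z_0)^2$ by $\int_0^1 |f|^2 dz$, and estimates the integral term by Cauchy–Schwarz.)

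Next I would integrate this pointwise (in $z$) bound over the horizontal domain $[0,L]^2$. The key point is that the supremum over $z$ commutes past the horizontal integral in a favorable direction:
\begin{align*}
\sup_{z\in[0,1]} \int_{[0,L]^2} |f(x,y,z)|^2 \, dx\, dy \leq \int_{[0,L]^2} \sup_{z\in[0,1]} |f(x,y,z)|^2 \, dx\, dy.
\end{align*}
Applying the one–dimensional estimate inside the right-hand integral and then using the Cauchy–Schwarz inequality in $(x,y)$ on the product of the two square-root factors yields
\begin{align*}
\int_{[0,L]^2} \sup_{z} |f|^2 \, dx\, dy \leq C \left(\int_\Omega |f|^2\right)^{\frac{1}{2}} \left(\int_\Omega |f|^2 + \int_\Omega |f_z|^2\right)^{\frac{1}{2}} = C \|f\|_2 \left(\|f\|_2 + \|f_z\|_2\right),
\end{align*}
which is exactly (\ref{ineq-2}).

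I expect the only delicate point to be the justification of the one–dimensional Agmon step under the stated hypotheses: we are told merely that $f, f_z \in L^2(\Omega)$, so by Fubini the slice $z \mapsto f(x,y,z)$ lies in $H^1(0,1)$ for almost every $(x,y)$, and the above manipulations are valid for those slices; the measurability of $(x,y)\mapsto \sup_z |f(x,y,z)|^2$ and the passage of $\sup_z$ inside the integral should be handled by a standard density/approximation argument (approximate $f$ by smooth periodic functions, for which everything is classical, and pass to the limit using the estimate itself). Everything else is a routine application of Cauchy–Schwarz, so the proof is short.
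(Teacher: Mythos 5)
Your proposal is correct and follows essentially the same route as the paper: apply the one--dimensional Agmon inequality to the slice $z\mapsto f(x,y,z)$ for a.e.\ fixed $(x,y)$, integrate the resulting bound over $[0,L]^2$, and finish with the Cauchy--Schwarz inequality in the horizontal variables. The only cosmetic difference is that your final displayed ``$=$'' should be ``$\leq$'' (since $(\|f\|_2^2+\|f_z\|_2^2)^{1/2}\leq \|f\|_2+\|f_z\|_2$), which is harmless.
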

\begin{proof}
Recall the Agmon's inequality in one dimension (cf. \cite{Agmon, CF88, Temam}):
$$\|\psi\|_{L^{\infty}([0,1])} \leq C\|\psi\|_{L^2([0,1])}^{1/2}  \|\psi\|_{H^1([0,1])}^{1/2},   \;\; \text{for any} \;\; \psi \in H^1([0,1]).$$
Then, we have, for a.e. $z\in [0,1]$,
\begin{align*}
&\int_{[0,L]^2} |f(x,y,z)|^2 dx dy \notag\\
&\leq C\int_{[0, L]^2}       \left(\int_0^1   |f|^2 dz  \right)^{1/2}      \left(\int_0^1 (|f|^2 + |f_z|^2) dz \right)^{1/2}   dx dy    \notag\\
&\leq C \|f\|_2 \left( \|f\|_2 + \|f_z\|_2 \right),
 \end{align*}
where we have used the Cauchy-Schwarz inequality in the last step.
\end{proof}

\vspace{0.1 in}

Next, we derive an elementary Gronwall-type inequality which will be used to deal with the temperature equation (\ref{model-3}) in section \ref{sec-z}.
\begin{lemma}   \label{lemma-3}
Given continuous functions $\eta$, $f$, $g$, $h: [0,\infty) \rightarrow [0,\infty)$ such that $\eta\in C^1([0,\infty))$.
Suppose
\begin{align}     \label{ein-1}
\eta \eta' + h \leq f + g\eta,  \;\;\; \text{on} \;\;  [0,\infty),
\end{align}
then
\begin{align*}
\eta^2(t)  +   2\int_0^t h(s) ds \leq C\left[  \eta^2(0)  +  \int_0^t   f(s) ds  +   \left(\int_0^t  g(s) ds   \right)^2   \right],   \;\;\text{for all}\;\;  t\geq 0.
\end{align*}
\end{lemma}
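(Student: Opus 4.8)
The plan is to integrate \eqref{ein-1} once and then dispose of the resulting quadratic self-reference by a monotonicity argument. Since $\eta\in C^1$ and every function involved is continuous, I would first rewrite $\eta\eta'=\tfrac12(\eta^2)'$ and integrate \eqref{ein-1} over $[0,t]$ to obtain
\[
\eta^2(t)+2\int_0^t h(s)\,ds\ \le\ \eta^2(0)+2\int_0^t f(s)\,ds+2\int_0^t g(s)\eta(s)\,ds\ =:\ \Phi(t).
\]
Because $f,g,\eta\ge 0$, the function $t\mapsto\Phi(t)$ is nondecreasing; and because $2\int_0^t h\ge 0$, we get $\eta^2(t)\le\Phi(t)$, hence $\eta(t)\le\sqrt{\Phi(t)}$ for every $t\ge 0$ (using $\eta\ge 0$).

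The next step is to feed this pointwise bound back into $\Phi$. For $s\in[0,t]$, monotonicity gives $\eta(s)\le\sqrt{\Phi(s)}\le\sqrt{\Phi(t)}$, so $\int_0^t g\eta\le \sqrt{\Phi(t)}\int_0^t g$, and the definition of $\Phi(t)$ turns into the closed inequality
\[
\Phi(t)\ \le\ a+b\sqrt{\Phi(t)},\qquad a:=\eta^2(0)+2\int_0^t f(s)\,ds,\quad b:=2\int_0^t g(s)\,ds.
\]
Writing $z=\sqrt{\Phi(t)}$ this reads $z^2-bz-a\le 0$, whence $z\le\tfrac12\bigl(b+\sqrt{b^2+4a}\bigr)$, and therefore, using $(x+y)^2\le 2x^2+2y^2$, $\Phi(t)=z^2\le b^2+2a$ (the case $\Phi(t)=0$ being trivial). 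Unwinding $a,b$ and recalling $\eta^2(t)+2\int_0^t h\le\Phi(t)$ yields
\[
\eta^2(t)+2\int_0^t h(s)\,ds\ \le\ 2\eta^2(0)+4\int_0^t f(s)\,ds+4\Bigl(\int_0^t g(s)\,ds\Bigr)^2,
\]
which is the assertion, e.g.\ with $C=4$.

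I do not expect a genuine obstacle here; the two points that deserve a moment of care are: (i) justifying that $\Phi$ is nondecreasing, since this is exactly what licenses replacing $\eta(s)$ by the single constant $\sqrt{\Phi(t)}$ inside $\int_0^t g\eta$ — without it one only gets $\int_0^t g\eta\le\int_0^t g(s)\sqrt{\Phi(s)}\,ds$, which is not yet a closed inequality; and (ii) the elementary estimate of the positive root of $z^2-bz-a\le 0$, where one should split $\sqrt{b^2+4a}$ appropriately so that the bound comes out proportional to $a$ plus $b^2=4\bigl(\int_0^t g\bigr)^2$, rather than to a cross term $\sqrt{a}\,b$. Everything else is the fundamental theorem of calculus together with the nonnegativity of $f$, $g$, $h$, $\eta$.
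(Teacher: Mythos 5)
Your proof is correct, and it takes a genuinely different route from the paper's. You integrate \eqref{ein-1} once, which immediately carries the term $2\int_0^t h$ along, and then close the resulting self-referential bound $\Phi(t)\le a+b\sqrt{\Phi(t)}$ by the standard quadratic-root estimate; the two points you flag (monotonicity of $\Phi$, which lets you replace $\eta(s)$ by the single constant $\sqrt{\Phi(t)}$, and the splitting of $\sqrt{b^2+4a}$) are exactly the right ones and you handle both correctly, ending with the explicit constant $C=4$. The paper instead substitutes $\xi(t)=\eta(t)-\int_0^t g(s)\,ds$, argues by cases on the sign of $\xi'$ to get $\xi^2(t)\le\xi^2(0)+2\int_0^t f$, converts this into a pointwise bound on $\eta^2$, and only then recovers the $\int_0^t h$ term by feeding that bound back into \eqref{ein-1} and integrating a second time. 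Your argument buys two things: it is a single-pass proof (the $h$ term never has to be recovered afterwards), and it avoids the sign-of-$\xi'$ case analysis, which as literally written in the paper is delicate --- $[0,t]$ need not decompose into finitely many intervals on which $\xi'$ has a fixed sign, and on an interval where $\xi'\le 0$ the step from $\xi(t)\le\xi(a)$ to $\xi^2(t)\le\xi^2(a)$ needs $\xi(t)\ge 0$ or a separate word. The paper's substitution, on the other hand, is the more ``ODE-flavored'' trick and generalizes to situations where one wants a pointwise bound on $\eta$ before knowing anything about $h$. Either way, your write-up is complete and self-contained.
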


\begin{proof}
Set $\xi(t)=\eta(t) -  \int_0^t    g(s) ds$. By (\ref{ein-1}), $\eta \eta' \leq f + g\eta$ on $[0,\infty)$, i.e.,
\begin{align}  \label{ein-2}
\xi \xi' +  \xi' \int_0^t g(s) ds  \leq f,        \;\;\; \text{on} \;\;  [0,\infty).
\end{align}
On the one hand, if $\xi'\geq 0$ on some interval $[a,b]\subset [0,\infty)$, then (\ref{ein-2}) implies that $\xi \xi' \leq f$ on $[a,b]$, namely,
\begin{align}   \label{ein-3}
\xi^2(t) \leq \xi^2(a)  + 2\int_{a}^t  f(s) ds,    \;\; \text{for all} \;\;  t\in [a,b].
\end{align}
On the other hand, if $\xi' \leq 0$ on some interval $[a,b]\subset [0,\infty)$, then $\xi(t) \leq \xi(a)$ for all $t\in [a,b]$, so (\ref{ein-3}) also holds. In sum, we have
\begin{align}   \label{ein-4}
\xi^2(t) \leq \xi^2(0)  + 2\int_{0}^t  f(s) ds  = \eta^2(0)  + 2\int_{0}^t  f(s) ds,        \;\; \text{for all} \;\;  t\geq 0.
\end{align}
Since $\eta(t)=\xi(t) +  \int_0^t g(s) ds$, we obtain from (\ref{ein-4}) that
\begin{align}  \label{ein-5}
\eta^2(t)  &\leq 2 \xi^2(t) +   2 \left(\int_0^t g(s) ds\right)^2    \notag\\
&\leq   2  \eta^2(0)  + 4\int_{0}^t  f(s) ds  +     2 \left(\int_0^t g(s) ds\right)^2,   \;\;\text{for all}\;\; t\geq 0.
\end{align}

Next, we substitute (\ref{ein-5}) into the right-hand side of (\ref{ein-1}). Then
\begin{align}  \label{ein-6}
\frac{1}{2}[\eta^2(t)]' + h(t) \leq f(t) + g(t) \left [C  \left(\eta(0)  + \Large(\int_{0}^t  f(s) ds\Large)^{1/2}  +   \int_0^t g(s) ds \right)  \right],
\end{align}
for all $t\geq 0$. Integrating (\ref{ein-6}) over $[0,t]$ yields
\begin{align*}
\eta^2(t) +  2\int_0^t h(s) ds
&\leq  \eta^2(0) +  C \int_0^t  \left[  g(s) \left(\eta(0)  + \Large(\int_{0}^s  f(\tau) d\tau\Large)^{1/2}  +   \int_0^s g(\tau) d\tau \right)  \right]ds\notag\\
&\leq   \eta^2(0) +   C  \left(\eta(0)  + \Large(\int_{0}^t  f(\tau) d\tau\Large)^{1/2}  +   \int_0^t g(\tau) d\tau \right)   \int_0^t g(s)ds  \notag\\
&\leq C\left[\eta^2(0)  +\int_{0}^t  f(s) ds    +    \left( \int_0^t g(s) ds \right)^2     \right],
\end{align*}
where in the last step we use Young's inequality.
\end{proof}

\vspace{0.1 in}
Finally, we state a well-known uniform Gronwall Lemma. The proof can be found, e.g., in \cite{Temam}.
\begin{lemma}   \label{gronwall}
Let $g$, $h$, $\eta$ be three positive locally integrable functions on $[0,\infty)$ such that $\eta'$ is locally integrable, and which satisfy
\begin{align*}
\frac{d\eta}{dt} \leq g\eta +h,  \;\; \text{for}\;\; t\geq 0.
\end{align*}
Then,
\begin{align*}
\eta(t+1)\leq e^{\int_t^{t+1}  g(s) ds }  \left(    \int_t^{t+1} \eta(s) ds  + \int_t^{t+1}  h(s) ds     \right) ,  \;\; \text{for all} \;\; t\geq 0.
\end{align*}
\end{lemma}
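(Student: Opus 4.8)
The plan is to reduce the integral inequality to an elementary comparison argument for absolutely continuous functions on the unit interval $[t,t+1]$, and then to average over the left endpoint in order to remove the pointwise dependence on $\eta$.

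First I would fix $t\geq 0$ and introduce the integrating factor $G(s)=\exp\left(-\int_t^s g(\tau)\,d\tau\right)$ for $s\in[t,t+1]$. Since $g\geq 0$ is locally integrable, $G$ is absolutely continuous, non-increasing, and satisfies $0<G(s)\leq 1$ on $[t,t+1]$. Because $\eta'$ is locally integrable, $\eta$ is absolutely continuous on $[t,t+1]$, so the product $y(s):=\eta(s)G(s)$ is absolutely continuous there with $y'(s)=\bigl(\eta'(s)-g(s)\eta(s)\bigr)G(s)$ for a.e.\ $s$. Using the hypothesis $\eta'\leq g\eta+h$ together with $0<G\leq 1$ and $h\geq 0$, this gives $y'(s)\leq h(s)G(s)\leq h(s)$ for a.e.\ $s\in[t,t+1]$.

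Next I would integrate this differential inequality from an arbitrary point $r\in[t,t+1]$ up to $t+1$, obtaining $y(t+1)-y(r)\leq \int_r^{t+1}h(s)\,ds\leq \int_t^{t+1}h(s)\,ds$. Writing $y(t+1)=\eta(t+1)G(t+1)$ and bounding $y(r)=\eta(r)G(r)\leq \eta(r)$ (again by $0<G\leq 1$), this reads $\eta(t+1)G(t+1)\leq \eta(r)+\int_t^{t+1}h(s)\,ds$. The point is that $r$ is still free: integrating this inequality in $r$ over $[t,t+1]$, a unit interval, leaves the two constant terms unchanged and yields $\eta(t+1)G(t+1)\leq \int_t^{t+1}\eta(r)\,dr+\int_t^{t+1}h(s)\,ds$. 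Multiplying through by $1/G(t+1)=\exp\left(\int_t^{t+1}g(\tau)\,d\tau\right)$ gives precisely the asserted estimate.

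There is no real obstacle here; the only points requiring a little care are the justification of the product-rule computation for $y=\eta G$ under the mere assumption that $\eta$ is absolutely continuous with $\eta'$ locally integrable, and the systematic use of the positivity of $g$ and $h$ to replace $G(s)$ by $1$ and to enlarge the domain of integration of $h$ to all of $[t,t+1]$. The averaging-in-$r$ step is the one genuinely structural idea: it is what converts an ordinary non-autonomous Gronwall bound, which would involve $\eta$ at a single time, into a bound depending only on the integral of $\eta$ over the preceding unit interval, as needed in the applications to the large-time behavior of solutions.
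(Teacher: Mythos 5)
Your proof is correct. The paper does not prove this lemma but simply cites Temam, and your argument --- integrating factor $G(s)=\exp\bigl(-\int_t^s g\bigr)$, the pointwise bound $\eta(t+1)G(t+1)\leq \eta(r)+\int_t^{t+1}h\,ds$, and then averaging over $r\in[t,t+1]$ --- is precisely the classical proof of the uniform Gronwall lemma found there, so there is nothing to add beyond the caveat you already note, namely that ``$\eta'$ locally integrable'' is to be read as $\eta$ being locally absolutely continuous.
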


\vspace{0.1 in}

\section{Existence of strong solutions}   \label{sec-exist}
Our strategy for proving the existence of strong solutions for system (\ref{model-1})-(\ref{model-4}) is a ``modified" Galerkin method.
The \emph{a priori} estimate for $\theta$ involves $L^{\infty}$ norm in the vertical variable, which is not easy to obtain with the standard Galerkin approximation scheme. To overcome the difficulty, we propose a ``Galerkin-like" system, which consists of a Galerkin approximation for velocity equations only, coupled with a PDE for the temperature.

\subsection{Galerkin-like approximation system}   \label{MGalerkin}
We assume initial data $\mathbf u_0, w_0, \theta_0 \in H^1(\Omega)$ with $\bar{\mathbf u_0} =0$, $\bar{w_0}=\bar{\theta_0}=0$, and $\omega_0 =   \nabla_h \times \mathbf u_0 \in L^2(\Omega)$.

Let $e_{\mathbf j} = \exp\left(2\pi i [(j_1 x+j_2 y)/L + j_3 z] \right)$ for $\mathbf j=(j_1,j_2,j_3)^{tr}\in \mathbb Z^3$, which form a basis of the $L^2(\Omega)$ space of periodic functions in $\Omega=   [0,L]^2 \times [0,1] $. For $m\in \mathbb N$, denote by $P_m\left(L^2(\Omega)\right)$ the subspace of $L^2(\Omega)$ spanned by $\{e_{\mathbf j}\}_{|\mathbf j|\leq m}$. Also, for an $L^2(\Omega)$ function $f=\sum_{\mathbf j\in \mathbb Z^3} f_{\mathbf j} e_{\mathbf j}$, where $f_{\mathbf j}=(f,e_{\mathbf j})$,
we denote by $P_m f=\sum_{|\mathbf j|\leq m} f_{\mathbf j} e_{\mathbf j}$ the orthogonal projection.

In order to prove the existence of strong solutions for system (\ref{model-1})-(\ref{model-4}), we introduce the following ``Galerkin-like" approximation system, for $m\geq 2$,
\begin{align}
&\partial_t w_m+ P_m\left(\mathbf u_m \cdot \nabla_h w_m\right) - \partial_z \phi_m= \Gamma P_m {\theta^{(m-1)}} + \frac{1}{Re} \Delta_h w_m, \label{Gl-1} \\
&\partial_t \omega_m + P_m\left(\mathbf u_m \cdot \nabla_h \omega_m\right) - \partial_z w_m = \frac{1}{Re} \Delta_h \omega_m  + \epsilon^2 \partial_{zz} \phi_m , \label{Gl-2}\\
&\nabla_h \cdot \mathbf u_m=0,  \label{Gl-3}    \\
&\omega_m = \nabla_h \times \mathbf u_m = -\Delta_h \phi_m,  \;  \text{with}\;  \bar{\phi_m}=0,   \label{Gl-4}  \\
&\partial_t {\theta^{(m)}}  + \mathbf u_m \cdot \nabla_h {\theta^{(m)}}  +  w_m \bar{w_m{\theta^{(m)}}}= \frac{1}{Pe} \Delta_h {\theta^{(m)}},
 \label{Gl-5}
\end{align}
where $\omega_m, \mathbf u_m, \phi_m, w_m \in  P_m\left(L^2(\Omega)\right)$, with the initial condition
\begin{align}   \label{Gl-6}
\omega_m(0)=P_m \omega_0,  \;\;   \mathbf u_m (0) = P_m \mathbf u_0,  \;\;  w_m(0)=P_m w_0 ,  \;\;  {\theta^{(m)}}(0)=\theta_0.
\end{align}
Since (\ref{Gl-1}) includes the term ${\theta^{(m-1)}}$, for $m\geq 2$, we have to specify $\theta^{(1)}$.
Here, we let $\theta^{(1)}$ satisfy the heat equation
\begin{align}   \label{theta1}
\partial_t \theta^{(1)} - \frac{1}{Pe}\Delta_h \theta^{(1)}=0, \;\;  \text{with}   \;\;   \theta^{(1)}(0)=\theta_0.
\end{align}

For a given $\theta^{(m-1)}$, velocity equations (\ref{Gl-1})-(\ref{Gl-4}) are genuine Galerkin approximation at level $m$, which can be regarded as a system of ODEs, whereas the temperature equation (\ref{Gl-5}) is a PDE. On the one hand, $\omega_m$, $\mathbf u_m$, $w_m$ are in the subspace $P_m (L^2(\Omega))$, namely, they are finite linear combination of Fourier modes. On the other hand, we do not demand $\theta^{(m)}$ to be finite combination of Fourier modes, so the superscript $m$ is adopted to emphasize the distinction between $\theta^{(m)}$ and $(\omega_m,\mathbf u_m,w_m)^{tr}$.

It is important to notice that the ``Galerkin-like" system (\ref{Gl-1})-(\ref{Gl-6}) is set up in the format of an iteration.
Let $T>0$. We claim, for any $m\geq 2$, system (\ref{Gl-1})-(\ref{Gl-6}) has a unique solution on $[0,T]$. This can be seen by an induction process described as follows.
To begin the induction, a function $\theta^{(1)} \in C([0,T];H^1(\Omega))$ is given satisfying (\ref{theta1}).
Now, we assume ${\theta^{(m-1)}} \in C([0,T];H^1(\Omega))$ is known for an $m\geq 2$, and show that system (\ref{Gl-1})-(\ref{Gl-6}) possesses a unique solution $(\omega_m, \mathbf u_m,  w_m,  {\theta^{(m)}})^{tr}$ on $[0,T]$.
Indeed, since the velocity equations (\ref{Gl-1})-(\ref{Gl-4}) form a system of ODEs with quadratic nonlinearities, by the standard theory of ordinary differential equations, a unique classical solution $(\omega_m,\mathbf u_m,  w_m)^{tr}$ for (\ref{Gl-1})-(\ref{Gl-4}) exists for a short time. Furthermore, one can show that the $L^2$ norm of $(\mathbf u_m,  w_m)^{tr}$ has a bound independent of time (see (\ref{L2-4}) below), thus $(\omega_m,\mathbf u_m,  w_m)^{tr}$ can be extended to $[0,T]$. Because $\mathbf u_m$ and $w_m$ have finitely many Fourier modes, they are analytic in space. Next we input $\mathbf u_m$ and $w_m$ into the temperature equation (\ref{Gl-5}) and solve for $\theta^{(m)}$. At this stage, $\mathbf u_m$ and $w_m$ are known smooth functions, thus (\ref{Gl-5}) is a \emph{linear} PDE, which has a unique solution
\begin{align} \label{reg-theta}
\theta^{(m)} \in C([0,T];H^1(\Omega))\;\; &\text{with}\;\; \partial_t \theta^{(m)}, \Delta_h \theta^{(m)}, \nabla_h \partial_z \theta^{(m)} \in L^2(\Omega \times (0,T)),  \\
&\text{and}   \;\;  \partial_t \partial_z \theta^{(m)}   \in    L^2(0,T; (H^1_h(\Omega))'),   \label{reg-theta1}
\end{align}
so that (\ref{Gl-5}) holds in the space $L^2(\Omega \times (0,T))$.
Then, we can put  ${\theta^{(m)}}$ back into (\ref{Gl-1}) to repeat the procedure to obtain $(\omega_{m+1}, \mathbf u_{m+1},  w_{m+1},  \theta^{(m+1)})^{tr}$. In conclusion, given $\theta^{(1)}$ satisfying (\ref{theta1}), by induction, the ``Galerkin-like" system (\ref{Gl-1})-(\ref{Gl-6}) has a unique solution $(\omega_m, \mathbf u_m,  w_m,  {\theta^{(m)}})^{tr}$ on $[0,T]$, for any $m\geq 2$.

We aim to show that the $H^1(\Omega)$ norm of $(\mathbf u_m,  w_m,  {\theta^{(m)}})^{tr}$ is bounded on $[0,T]$ uniformly in $m$, and there exists a subsequence converging to a solution $(\mathbf u,  w,  \theta)^{tr}$ of system (\ref{model-1})-(\ref{model-4}).

\begin{remark}
By assuming $\bar{\mathbf u_0}=0$, $\bar{w_0} =0$ and $\bar{\theta_0}=0$, we have
\begin{align} \label{mean0}
\bar{\mathbf u_m} =0 ,  \;\; \bar{w_m}=0 , \;\;  \bar{\theta^{(m)}}=0, \;\; \text{for all} \,\, t\in [0,T], \; m\geq 2.
\end{align}
Indeed, since (\ref{Gl-3})-(\ref{Gl-4}) hold, it is required that $\bar{\omega_m}=0$, $\bar{\mathbf u_m}=0$, and $\bar{\phi_m} =0$ for all $m\geq 2$.
To see that $\bar{w_m}=0$ and $\bar{\theta^{(m)}}=0$ for all $m\geq 2$, we use induction. First, note that $\bar{\theta^{(1)}}=0$, due to (\ref{theta1}) and $\bar{\theta_0}=0$. Now, we assume $\bar{\theta^{(m-1)}}=0$ for an $m\geq 2$, and show $\bar{w_m}=0$, $\bar{\theta^{(m)}}=0$. In fact, by taking the horizontal mean of each term of (\ref{Gl-1}) and using $\nabla_h \cdot \mathbf u_m=0$, we obtain $\partial_t \bar{w_m}=0$. Then, because $\bar{w_m}(0)=\bar{P_m w_0}=0$, it follows that $\bar{w_m}=0$. Next we take the horizontal mean of each term of the temperature equation (\ref{Gl-5}) to get $\partial_t \bar{\theta^{(m)}}=0$, which implies $\bar{\theta^{(m)}}=0$, since $\bar{\theta^{(m)}}(0)=\bar{\theta_0}=0$. Finally, to check whether  (\ref{mean0}) is consistent with equation (\ref{Gl-2}), we take the horizontal mean on (\ref{Gl-2}), then all terms vanish, if (\ref{mean0}) holds.
\end{remark}

\vspace{0.1 in}

\subsection{Uniform bound for $(\mathbf u_m,  w_m,  {\theta^{(m)}})^{tr}$ in $H^1(\Omega)$}     \label{sec-unib}
This section is devoted to proving that $(\mathbf u_m,  w_m,  {\theta^{(m)}})^{tr}$ has a uniform bound in $L^{\infty}(0,T; H^1(\Omega))$ independent of $m$.
It implies that $\omega_m$ is uniformly bounded in $L^{\infty}(0,T;L^2(\Omega))$.
The calculations in this section are legitimate, because solutions for (\ref{Gl-1})-(\ref{Gl-6}) are sufficiently regular. More precisely, $\omega_m$, $\mathbf u_m$ and $w_m$ are trigonometric polynomials satisfying (\ref{Gl-1})-(\ref{Gl-4}) in the classic sense, while $\theta^{(m)}$ has regularity (\ref{reg-theta})-(\ref{reg-theta1}) so that equation (\ref{Gl-5}) holds in $L^2(\Omega \times (0,T))$.

\subsubsection{Estimate for $\|{\theta^{(m)}}\|_2^2$}
For any $m \geq 2$, we multiply (\ref{Gl-3}) with ${\theta^{(m)}}$ and then integrate it over $\Omega \times [0,t]$ to get
\begin{align}  \label{theta-ene'}
\frac{1}{2}\|{\theta^{(m)}}(t)\|_2^2   + \int_0^t   \left(\frac{1}{Pe} \|\nabla_h {\theta^{(m)}}\|_2^2   +   \|\bar{w_m{\theta^{(m)}}}\|_2^2\right)  ds
=\frac{1}{2} \|{\theta^{(m)}}(0)\|_2^2 =  \frac{1}{2}\|\theta_0\|_2^2,
\end{align}
for all $t\in [0,T]$. Also, for $\theta^{(1)}$, we obtain from (\ref{theta1}) that
\begin{align}    \label{theta-ene0}
\frac{1}{2}\|\theta^{(1)}(t)\|_2^2   + \int_0^t  \frac{1}{Pe} \|\nabla_h \theta^{(1)}\|_2^2 ds  =      \frac{1}{2} \|\theta^{(1)}(0)\|_2^2
   = \frac{1}{2} \|\theta_0\|_2^2, \;\;  \text{for all} \;t\in [0,T].
\end{align}

\vspace{0.1 in}

\subsubsection{Estimate for $\|w_m\|_2^2 + \|\mathbf u_m\|_2^2$}
Taking the $L^2(\Omega)$ inner product of equations (\ref{Gl-1})-(\ref{Gl-2}) with $(w_m,\phi_m)^{tr}$ shows
\begin{align}    \label{L2-1}
&\frac{1}{2}\frac{d}{dt} \left(\|w_m\|_2^2 + \|\mathbf u_m\|_2^2 \right) +\frac{1}{Re} \left(\|\nabla_h w_m\|_2^2
+ \|\nabla_h \mathbf u_m\|_2^2 \right)  + \epsilon^2 \|\partial_z \phi_m\|_2^2    \notag\\
&= \Gamma ({\theta^{(m-1)}}, w_m)
\end{align}
where we have used identities (\ref{iden-1}), (\ref{iden-3}) and (\ref{iden-2}).

Since the horizontal mean $\bar{w_m}=0$ by (\ref{mean0}), one has the Poincar\'e inequality $\|w_m\|_2 \leq \gamma \|\nabla_h w_m\|_2$. Then
$\Gamma ({\theta^{(m-1)}}, w_m) \leq \Gamma \|{\theta^{(m-1)}}\|_2 \|w_m\|_2
\leq \frac{1}{2Re} \|\nabla_h w_m\|_2^2 + C \|{\theta^{(m-1)}}\|_2^2$. As a result,
\begin{align}    \label{L2-2}
\frac{d}{dt} \left(\|w_m\|_2^2 + \|\mathbf u_m\|_2^2 \right) +\frac{1}{Re} \left(\|\nabla_h w_m\|_2^2
+ \|\nabla_h \mathbf u_m\|_2^2 \right)  + \epsilon^2 \|\partial_z \phi_m\|_2^2  \leq C \|{\theta^{(m-1)}}\|_2^2.
\end{align}
Integrating (\ref{L2-2}) over $[0,t]$, we obtain, for $m\geq 2$,
\begin{align}  \label{L2-4}
&\|w_m(t)\|_2^2 + \|\mathbf u_m(t)\|_2^2 +\int_0^t \left(\frac{1}{Re} \left(\|\nabla_h w_m\|_2^2
+ \|\nabla_h \mathbf u_m\|_2^2 \right)  + \epsilon^2 \|\partial_z \phi_m\|_2^2 \right)ds  \notag\\
&\leq   \|w_0\|_2^2 + \|\mathbf u_0\|_2^2 + C\int_0^t  \|{\theta^{(m-1)}}\|_2^2 ds
\leq  \|w_0\|_2^2 + \|\mathbf u_0\|_2^2 + C \|\theta_0\|_2^2,
\end{align}
for all $t\in [0,T]$, where the last inequality is due to (\ref{theta-ene'}) and (\ref{theta-ene0}).
\vspace{0.1 in}

\subsubsection{Estimate for $\|\omega_m\|_2^2$} Taking the inner product of (\ref{Gl-2}) with $\omega_m$ yields
\begin{align}  \label{omega-1}
\frac{1}{2}\frac{d}{dt} \|\omega_m\|_2^2
+ \frac{1}{Re}\|\nabla_h \omega_m\|_2^2 + \epsilon^2 \|\partial_z \mathbf u_m\|_2^2 =   (\partial_z w_m, \omega_m),
\end{align}
where (\ref{iden-1}) and (\ref{iden-3}) have been used. Thanks to (\ref{iden-4}), after integration by parts, we have
\begin{align}   \label{omega-2}
&(\partial_z w_m,\omega_m)
=\int_{\Omega} \partial_z w_m (-\Delta_h \phi_m) dx dy dz = -\int_{\Omega} \nabla_{h}w_m \cdot  \nabla_h \partial_z \phi_m  dx dy dz  \notag\\
&\leq \|\nabla_h w_m\|_2\|\nabla_h \partial_z \phi_m\|_2 = \|\nabla_h w_m\|_2 \|\mathbf \partial_z \mathbf u_m\|_2
\leq \frac{\epsilon^2}{2} \|\partial_z \mathbf u_m\|_2^2 + \frac{1}{2\epsilon^2}  \|\nabla_h w_m\|_2^2.
\end{align}
Combining (\ref{omega-1}) and (\ref{omega-2}) implies
\begin{align}   \label{omega-3}
\frac{d}{dt} \|\omega_m\|_2^2
+ \frac{2}{Re}\|\nabla_h \omega_m\|_2^2 + \epsilon^2 \|\partial_z \mathbf u_m\|_2^2
\leq \frac{1}{\epsilon^2} \|\nabla_h w_m\|_2^2.
\end{align}
By integrating (\ref{omega-3}) over the interval $[0,t]$, we obtain, for $m\geq 2$,
\begin{align}  \label{omega}
&\|\omega_m(t)\|_2^2 + \int_0^t \left(\frac{2}{Re}\|\nabla_h \omega_m\|_2^2 + \epsilon^2 \|\partial_z \mathbf u_m\|_2^2  \right) ds
 \leq \|\omega_m(0)\|_2^2 + \frac{1}{\epsilon^2} \int_0^t  \|\nabla_h w_m\|_2^2  ds \notag\\
&\leq \|\omega_0\|_2^2 + C \left(\|w_0\|_2^2 + \|\mathbf u_0\|_2^2 + \|\theta_0\|_2^2\right),   \;\;\; \text{for all}\,\, t\in [0,T],
\end{align}
where the last inequality is due to (\ref{L2-4}).

\vspace{0.1 in}

\subsubsection{Estimate for $\|\nabla_h w_m\|_2^2$}
Taking the inner product of (\ref{Gl-1}) with $-\Delta_h w_m$ yields
\begin{align}    \label{H1w-00}
&\frac{1}{2} \frac{d}{dt}\|\nabla_h w_m\|_2^2 +\frac{1}{Re} \|\Delta_h w_m\|_2^2    \notag\\
&\leq \int_{\Omega} |(\mathbf u_m \cdot \nabla_h w_m) \Delta_h w_m|  dx dy dz
+ \|\partial_z \phi_m\|_2 \|\Delta_h w_m\|_2  + \Gamma \|{\theta^{(m-1)}}\|_2 \|\Delta_h w_m\|_2  \notag\\
&\leq C   \|\omega_m\|_2^{1/2} (\|\mathbf u_m\|_2 + \|\partial_z \mathbf u_m\|_2)^{1/2}   \|\nabla_h w_m\|_2^{1/2}  \|\Delta_h w_m\|_2^{3/2}  \notag\\
&\hspace{0.2 in} +\|\partial_z \phi_m\|_2 \|\Delta_h w_m\|_2  + \Gamma \|{\theta^{(m-1)}}\|_2 \|\Delta_h w_m\|_2,
\end{align}
where we have used Lemma \ref{lemma1} to establish the last inequality. Then, employing the Young's inequality, we obtain
\begin{align}     \label{H1w-0}
&\frac{d}{dt}\|\nabla_h w_m\|_2^2 +\frac{1}{Re} \|\Delta_h w_m\|_2^2           \notag\\
&\leq C\|\omega_m\|_2^2
(\|\mathbf u_m\|_2^2 + \|\partial_z \mathbf u_m\|_2^2) \|\nabla_h w_m\|_2^2 + C\left(\| \partial_z \phi_m\|_2^2
+ \|{\theta^{(m-1)}}\|_2^2\right).
\end{align}
Thanks to the Gronwall's inequality, we have, for $m\geq 2$,
\begin{align}  \label{H1w}
&\|\nabla_h w_m(t)\|_2^2 + \frac{1}{Re}\int_0^t \|\Delta_h w_m\|_2^2 ds \notag\\
&\leq  \left(\|\nabla_h w_m(0)\|_2^2 +C \int_0^t \left(\|\partial_z \phi_m\|_2^2 + \|{\theta^{(m-1)}}\|_2^2\right) ds\right)
e^{\int_0^t C \|\omega_m\|_2^2 (\|\mathbf u_m\|_2^2 + \|\partial_z \mathbf u_m\|_2^2) ds} \notag\\
&\leq C (\|\nabla_h w_0\|_2, \|\omega_0\|_2, \|\theta_0\|_2),   \;\;\;  \text{for all}\,\,  t\in [0,T],
\end{align}
where the last inequality is due to estimates (\ref{theta-ene'}), (\ref{theta-ene0}), (\ref{L2-4}) and (\ref{omega}).
\vspace{0.1 in}

\subsubsection{Estimate for $ \| \bar{|{\theta^{(m)}}|^2} \|_{L^{\infty}}$}

We multiply (\ref{Gl-3}) by $\theta^{(m)}$ and integrate the result with respect to horizontal variables over $[0,L]^2$.
Recall the notation for the horizontal mean   $\bar f=\frac{1}{L^2}\int_{[0,L]^2} f dx dy$.
Since $\nabla_h \cdot \mathbf u_m=0$, it follows that
\begin{align}  \label{theta-1}
\frac{1}{2}\frac{\partial}{\partial t} \bar{ |\theta^{(m)}|^2}(z) + \frac{1}{Pe} \bar{|\nabla_h \theta^{(m)}|^2}(z) +  \left(\bar{w_m \theta^{(m)}}\right)^2 (z) =0,
\end{align}
for a.e. $z\in [0,1]$. Integrating (\ref{theta-1}) over $[0,t]$ gives us
\begin{align}   \label{theta-inff}
&\frac{1}{2}\bar{|{\theta^{(m)}}|^2}(z,t)+\int_0^t  \left[\frac{1}{Pe} \bar{|\nabla_h {\theta^{(m)}}|^2}(z) +  \left(\bar{w_m{\theta^{(m)}}}\right)^2 (z) \right] ds  \notag\\
&=  \frac{1}{2}\bar{|{\theta^{(m)}}|^2}(z,0)   =   \frac{1}{2} \bar{\theta_0^2}(z)  \leq C\left( \|\theta_0\|_2^2 +  \|\partial_z \theta_0\|_2^2 \right),
\end{align}
for a.e. $z\in [0,1]$, for all $t\in [0,T]$, $m\geq 2$, where the last inequality is due to Lemma \ref{lemma2}.

\vspace{0.1 in}

\subsubsection{Estimate for $\|\nabla_h {\theta^{(m)}}\|_2^2$}
Recall the regularity of $\theta^{(m)}$ given in (\ref{reg-theta}).
Thus, we can take the $L^2(\Omega)$ inner product of (\ref{Gl-3}) with $-\Delta_h \theta^{(m)}$, and after integrating by parts, we obtain
\begin{align}     \label{H1ta-f1}
&\frac{1}{2} \frac{d}{dt} \|\nabla_h \theta^{(m)}\|_2^2 + \frac{1}{Pe} \|\Delta_h \theta^{(m)}\|_2^2 \notag\\
 & \leq \int_{\Omega} \left|(\mathbf u_m \cdot \nabla_h \theta^{(m)}) \Delta_h \theta^{(m)} \right| dx dy dz
 +  \int_0^1 \left|\bar{w_m \theta^{(m)}} \right| \left(\int_{[0,L]^2} |\theta^{(m)} \Delta_h w_m | dx dy \right) dz \notag\\
&\leq C \|\nabla_h \mathbf u_m\|_2^{1/2}   \left(\|\mathbf u_m\|_2+\| \partial_z  \mathbf u_m\|_2\right)^{1/2} \|\nabla_h \theta^{(m)}\|_2^{1/2} \|\Delta_h \theta^{(m)}\|_2^{3/2}  \notag\\
& \hspace{0.2 in} + \|\bar{w_m\theta^{(m)}}\|_2  \|\Delta_h w_m\|_2   \|\bar{|\theta^{(m)}|^2}\|_{L^{\infty}}^{1/2},
\end{align}
where we have used Lemma \ref{lemma1}, the Cauchy-Schwarz inequality, and Poincar\'e inequality (\ref{poin}).

Employing the Young's inequality implies
\begin{align} \label{H1ta-0}
\frac{d}{dt} \|\nabla_h \theta^{(m)}\|_2^2 + \frac{1}{Pe} \|\Delta_h \theta^{(m)}\|_2^2
\leq &C \|\nabla_h \mathbf u_m\|_2^2  \left(\|\mathbf u_m\|_2^2+\|  \partial_z \mathbf u_m\|_2^2 \right)
 \|\nabla_h \theta^{(m)}\|_2^2     \notag\\
 &+ \|\bar{w_m \theta^{(m)}} \|_2^2   \|\bar{|\theta^{(m)}|^2} \|_{L^{\infty}} +      \|\Delta_h w_m\|_2^2.
 \end{align}
Applying the Gronwall's inequality to (\ref{H1ta-0}), we have, for $m\geq 2$,
\begin{align}     \label{theta-nh0}
&\|\nabla_h {\theta^{(m)}}(t)\|_2^2    + \frac{1}{Pe} \int_0^t \|\Delta_h {\theta^{(m)}}\|_2^2 ds   \notag\\
&\leq \left[  \|\nabla_h \theta_0\|_2^2   +   \int_0^t  \left(   \|\bar{w_m  {\theta^{(m)}}}\|_2^2    \|\bar{|{\theta^{(m)}}|^2}\|_{L^{\infty}} + \|\Delta_h w_m\|_2^2 \right) ds   \right]
e^{C\int_0^t  \|\nabla_h \mathbf u_m\|_2^2    \left(\|\mathbf u_m\|_2^2+\| \partial_z \mathbf u_m\|_2^2 \right) ds}   \notag\\
&\leq  C( \|\theta_0\|_{H^1},   \|\nabla_h w_0\|_2, \|\omega_0\|_2   ),           \;\;\;  \text{for}\,\,  t\in [0,T],
\end{align}
due to the bounds (\ref{theta-ene'}), (\ref{omega}), (\ref{H1w}) and (\ref{theta-inff}).

\vspace{0.1 in}

\subsubsection{Estimate for $\|\partial_z w_m\|_2^2 + \| \partial_z \mathbf u_m\|_2^2 + \| \partial_z {\theta^{(m)}}\|_2^2 $}    \label{sec-z}
We differentiate (\ref{Gl-1})-(\ref{Gl-2}) with respect to $z$ and multiply them by $\partial_z w_m$ and $\partial_z \phi_m$ respectively.
Integrating the obtained equations over $\Omega \times [0,t]$ yields
\begin{align}   \label{H1z-1}
&\frac{1}{2}\left(\|\partial_z w_m(t)\|_2^2 + \| \partial_z \mathbf u_m(t)\|_2^2\right) +
\frac{1}{Re} \int_0^t \left(\|\nabla_h \partial_z  w_m\|_2^2 + \|\partial_z \omega_m\|_2^2 \right) ds
+ \epsilon^2  \int_0^t \| \partial_{zz} \phi_m\|_2^2 ds  \notag\\
&\leq  \frac{1}{2} \left(\|\partial_z  w_m(0)\|_2^2 + \|\partial_z \mathbf u_m(0)\|_2^2 \right)
+ \int_0^t \int_{\Omega} |(  \partial_z{\mathbf u_m} \cdot \nabla_h w_m) \partial_z w_m| dx dy dz ds \notag\\
& \hspace{0.1 in} +  \int_0^t \int_{\Omega} |(\mathbf u_m \cdot \nabla_h \partial_z \phi_m) \partial_z \omega_m| dx dy dz  ds
+ \frac{1}{2}\int_0^t \left(\| \partial_z  {\theta^{(m-1)}}\|_2^2 + \Gamma^2 \|\partial_z w_m\|_2^2\right) ds,
\end{align}
for $t\in [0,T]$, where we have used (\ref{iden-1}) and (\ref{iden-2}).

Now, we estimate each nonlinear term in (\ref{H1z-1}). By Lemma \ref{lemma1} with $f=\nabla_h w_m$, $g= \partial_z \mathbf u_m$
and $h= \partial_z w_m$, we have
\begin{align}   \label{H1z-2}
&\int_{\Omega} |(\partial_z \mathbf u_m \cdot \nabla_h w_m) \partial_z w_m| dx dy dz  \notag\\
&\leq C \|\Delta_h w_m\|_2^{1/2}    \left(\|\nabla_h w_m\|_2 + \|\nabla_h \partial_z w_m\|_2 \right)^{1/2}
 \| \partial_z  \mathbf u_m\|_2^{1/2}   \| \partial_z \omega_m\|_2  ^{1/2} \| \partial_z  w_m\|_2 \notag\\
&\leq C  \|\Delta_h w_m\|_2   \| \partial_z  \mathbf u_m\|_2^{1/2}   \| \partial_z \omega_m\|_2  ^{1/2} \| \partial_z  w_m\|_2  \notag\\
&\hspace{0.2 in}+   C\|\Delta_h w_m\|_2^{1/2}   \|\nabla_h \partial_z w_m\|_2^{1/2}    \| \partial_z  \mathbf u_m\|_2^{1/2}   \| \partial_z \omega_m\|_2  ^{1/2} \| \partial_z  w_m\|_2\notag\\
&\leq \frac{1}{4Re}\left(\|\nabla_h \partial_z w_m\|_2^2 + \| \partial_z \omega_m\|_2^2 \right)
+   \| \partial_z \mathbf u_m\|_2^2 +  C \left(\|\Delta_h w_m\|_2^2
+   \| \partial_z \mathbf u_m\|_2^2\right) \| \partial_z w_m\|_2^2.
\end{align}
Also using Lemma \ref{lemma1} with $f=\mathbf u_m$, $g=\nabla_h \partial_z \phi_m$ and $h=\partial_z \omega_m$, we obtain
\begin{align} \label{H1z-4}
&\int_{\Omega} |(\mathbf u_m \cdot \nabla_h  \partial_z  \phi_m) \partial_z \omega_m| dx dy dz  \notag\\
&\leq C \|\omega_m\|_2^{1/2} \left(\|\mathbf u_m\|_2 + \| \partial_z  \mathbf u_m\|_2\right)^{1/2}
\| \partial_z  \mathbf u_m\|_2^{1/2}  \| \partial_z \omega_m\|_2^{3/2} \notag\\
&\leq \frac{1}{4Re}\|  \partial_z  \omega_m\|_2^2 + C \|\omega_m\|_2^2  \left(\|\mathbf u_m\|_2^2
+ \| \partial_z  \mathbf u_m\|_2^2 \right)   \| \partial_z   \mathbf u_m\|_2^2.
\end{align}

Applying (\ref{H1z-2})-(\ref{H1z-4}) to the right-hand side of inequality (\ref{H1z-1}) yields
\begin{align}      \label{H1z-uw}
&\|\partial_z w_m(t)\|_2^2 + \| \partial_z \mathbf u_m(t)\|_2^2 +
\frac{1}{Re} \int_0^t \left(\|\nabla_h \partial_z  w_m\|_2^2 + \|\partial_z \omega_m\|_2^2 \right) ds
+ \epsilon^2  \int_0^t \| \partial_{zz} \phi_m\|_2^2 ds  \notag\\
&\leq  \|\partial_z  w_m(0)\|_2^2 + \|\partial_z \mathbf u_m(0)\|_2^2
  + C\int_0^t    \left(   \|\omega_m\|_2^2 \|\mathbf u_m\|_2^2
+   \|\omega_m\|_2^2 \| \partial_z  \mathbf u_m\|_2^2 +1 \right) \| \partial_z \mathbf u_m\|_2^2 ds  \notag\\
&\hspace{0.2 in} + C  \int_0^t  \left(\|\Delta_h w_m\|_2^2  +   \| \partial_z \mathbf u_m\|_2^2 +1 \right) \|\partial_z w_m\|_2^2 ds + \int_0^t \| \partial_z  {\theta^{(m-1)}}\|_2^2  ds,
\end{align}
for all $t\in [0,T]$.

Next, we estimate $\|\partial_z \theta^{(m)}\|_2^2$.
Since $\theta^{(m)}$ has regularity (\ref{reg-theta})-(\ref{reg-theta1}) and $\mathbf u_m$, $w_m$ are analytic, we can differentiate (\ref{Gl-3}) with respect to $z$, and then multiply it by $\partial_z \theta^{(m)}$, and finally integrate the result with respect to horizontal variables over $[0,L]^2$ to obtain
\begin{align*}
&\frac{1}{2} \frac{d}{dt} \int_{[0,L]^2} | \partial_z \theta^{(m)}|^2 dx dy + \int_{[0,L]^2} ({ \partial_z \mathbf u_m} \cdot \nabla_h \theta^{(m)})  \partial_z \theta^{(m)} dx dy \notag\\
&\hspace{0.2 in}+ \bar{w_m\theta^{(m)}} \int_{[0,L]^2} (\partial_z w_m) (\partial_z \theta^{(m)} )dx dy    + \bar{(\partial_z w_m) \theta^{(m)}} \int_{[0,L]^2} w_m (\partial_z \theta^{(m)}) dx dy   \notag\\
& \hspace{0.2 in}  + \bar{w_m (\partial_z \theta^{(m)})} \int_{[0,L]^2} w_m (\partial_z \theta^{(m)}) dx dy \notag\\
&= - \frac{1}{Pe} \int_{[0,L]^2} |\nabla_h   \partial_z \theta^{(m)} |^2 dx dy,     \;\; \;   \text{for a.e.}   \,\, z\in [0,1].
\end{align*}
Recall the notation for the horizontal mean $\bar f =  \frac{1}{L^2}\int_{[0,L]^2} f dx dy.$  Therefore,
\begin{align}   \label{nH1z-2}
&\frac{1}{2} \frac{d}{dt} \int_{[0,L]^2}    |\partial_z \theta^{(m)}|^2 dx dy +   \frac{1}{Pe} \int_{[0,L]^2} |\nabla_h  \partial_z \theta^{(m)}|^2 dx dy +
L^2 \left|\bar{w_m (\partial_z \theta^{(m)})}\right|^2   \notag\\
&\leq  \int_{[0,L]^2}   \left| ({   \partial_z  \mathbf u_m} \cdot \nabla_h \theta^{(m)})  \partial_z \theta^{(m)}  \right| dx dy +
\left|\bar{w_m \theta^{(m)}}\right| \int_{[0,L]^2} |\partial_z w_m|   |\partial_z \theta^{(m)} | dx dy    \notag\\
&\hspace{0.2 in} + L^2  \left|\bar{ (\partial_z  w_m)     \theta^{(m)}} \right|  \left|\bar{w_m (\partial_z \theta^{(m)})}\right|,
 \;\; \;   \text{for a.e.}   \,\, z\in [0,1].
 \end{align}
Note,
\begin{align*}
&L^2 \left(\left|\bar{  (\partial_z w_m)   \theta^{(m)}}\right| \left|\bar{w_m   (\partial_z \theta^{(m)})} \right|\right)(z)
\leq   L^2 \left|\bar{  (\partial_z w_m)   \theta^{(m)}}\right|^2 (z) + L^2 \left|\bar{w_m   (\partial_z \theta^{(m)})}\right|^2(z)  \notag\\
&\leq   \bar{|\theta^{(m)}|^2}(z) \int_{[0,L]^2}   |\partial_z w_m (x,y,z) |^2   dx dy + L^2 \left|\bar{w_m (\partial_z \theta^{(m)})} \right|^2(z), \;\; \;   \text{for a.e.}   \,\, z\in [0,1].
\end{align*}
Thus, (\ref{nH1z-2}) implies
\begin{align}   \label{nH1z-4}
&\frac{1}{2} \frac{d}{dt} \int_{[0,L]^2}  |\partial_z \theta^{(m)} (x,y,z)|^2  dx dy +   \frac{1}{Pe} \int_{[0,L]^2} |\nabla_h \partial_z  \theta^{(m)} (x,y,z)|^2 dx dy  \notag\\
&\leq \int_{[0,L]^2}   \left| ({\partial_z  \mathbf u_m} \cdot \nabla_h \theta^{(m)})  \partial_z \theta^{(m)}  \right| (x,y,z) dx dy
+  \bar{|\theta^{(m)}|^2}(z)  \int_{[0,L]^2} |\partial_z w_m (x,y,z)|^2 dx dy
  \notag\\
& \hspace{0.2 in} + \left|\bar{w_m \theta^{(m)}} \right| (z)  \left(\int_{[0,L]^2} |\partial_z w_m (x,y,z) |^2  dx dy\right)^{1/2}
\left(\int_{[0,L]^2}  |\partial_z \theta^{(m)} (x,y,z) |^2  dx dy\right)^{1/2},
\end{align}
for a.e. $z\in [0,1]$. Applying Lemma \ref{lemma-3} to (\ref{nH1z-4}) with
$\eta (z,t)=  \left(\int_{[0,L]^2}  |\partial_z \theta^{(m)} (x,y,z,t)|^2  dx dy\right)^{1/2}$, and using Cauchy-Schwarz inequality, we deduce
\begin{align}   \label{nH1z-5}
&\int_{[0,L]^2}    |\partial_z \theta^{(m)}(x,y,z,t)|^2 dx dy  +  \frac{2}{Pe} \int_0^t    \int_{[0,L]^2} |\nabla_h \partial_z   \theta^{(m)}  (x,y,z,s)|^2 dx dy ds   \notag\\
&\leq C\int_{[0,L]^2}  | \partial_z \theta^{(m)}(x,y,z,0)|^2 dx dy  + C\int_0^t    \int_{[0,L]^2}
\left| ({  \partial_z \mathbf u_m} \cdot \nabla_h \theta^{(m)})  \partial_z \theta^{(m)}  \right| (x,y,z,s) dx dy  ds    \notag\\
&  \hspace{0.3 in} +  C \int_0^t  \left(  \bar{|\theta^{(m)}|^2} (z,s)   \int_{[0,L]^2} |\partial_z w_m (x,y,z,s)|^2 dx dy\right) ds     \notag\\
&  \hspace{0.3 in}  +  C \left(\int_0^t  \left|\bar{w_m\theta^{(m)}}\right|^2 (z,s) ds\right)   \int_0^t  \int_{[0,L]^2} |\partial_z w_m (x,y,z,s)|^2 dx dy  ds ,
\end{align}
for a.e. $z\in [0,1]$, and for all $t\in [0,T]$.

Then, we integrate (\ref{nH1z-5}) with respect to $z$ over $[0,1]$ to get
\begin{align}  \label{nH1z-6}
&\|  \partial_z \theta^{(m)}(t)\|^2_2 + \frac{2}{Pe} \int_0^t  \|\nabla_h \partial_z \theta^{(m)}\|_2^2 ds     \notag\\
&\leq  C\|\partial_z \theta^m(0)\|^2_2 + C \int_0^t    \int_{\Omega}  \left| ({ \partial_z   \mathbf u_m} \cdot \nabla_h \theta^{(m)} )  \partial_z \theta^{(m)}  \right| dx dy dz ds    \notag\\
& +  C      \int_0^t   \|\bar{|\theta^{(m)}|^2}\|_{L^{\infty}}      \|\partial_z w_m\|_2^2  ds
  +C  \left(\sup_{z\in [0,1]}  \int_0^t  \left|\bar{w_m\theta^{(m)}} \right|^2 (z,s) ds  \right)  \int_0^t     \| \partial_z w_m\|_2^2  ds.
\end{align}
Note that by using Lemma \ref{lemma1}, with $f=\nabla_h \theta^{(m)}$, $g= \partial_z \mathbf u_m$ and $h=  \partial_z \theta^{(m)}$, and Poincar\'e inequality (\ref{poin}), one has
\begin{align}   \label{H1z-5}
&C\int_{\Omega} \left|(   \partial_z \mathbf u_m \cdot \nabla_h \theta^{(m)}) \partial_z \theta^{(m)} \right| dx dy dz \notag\\
&\leq C \|\Delta_h \theta^{(m)}\|_2^{1/2}    \left(\|\nabla_h \theta^{(m)}\|_2 + \|\nabla_h \partial_z \theta^{(m)}\|_2\right)^{1/2}
\|  \partial_z \mathbf u_m\|_2^{1/2} \|\partial_z \omega_m\|_2^{1/2}  \| \partial_z \theta^{(m)}\|_2 \notag\\
&\leq \frac{1}{Pe}\|\nabla_h \partial_z \theta^{(m)}\|_2^2 + \frac{1}{2Re} \|\partial_z \omega_m\|_2^2
+ \|   \partial_z \mathbf u_m\|_2^2+ C\left(\|\Delta_h \theta^{(m)}\|_2^2 + \| \partial_z  \mathbf u_m\|_2^2   \right) \|  \partial_z \theta^{(m)}\|_2^2.
\end{align}
Substituting (\ref{H1z-5}) into (\ref{nH1z-6}), we obtain
\begin{align}    \label{H1z-t}
&\| \partial_z {\theta^{(m)}}(t)\|_2^2 + \frac{1}{Pe} \int_0^t  \|\nabla_h \partial_z {\theta^{(m)}}\|_2^2 ds   \notag\\
&\leq C \|\partial_z {\theta^{(m)}}(0)\|_2^2  +   \frac{1}{2 Re} \int_0^t   \|\partial_z \omega_m\|_2^2 ds  + \int_0^t  \|  \partial_z \mathbf u_m\|_2^2 ds       \notag\\
&  \hspace{0.2 in}
+   C \int_0^t  \left( \|\Delta_h {\theta^{(m)}}\|_2^2  + \| \partial_z \mathbf u_m\|_2^2 \right)  \|  \partial_z  {\theta^{(m)}}\|_2^2 ds
 +       C      \int_0^t   \|\bar{  |{\theta^{(m)}}|^2}\|_{L^{\infty}}      \|\partial_z w_m\|_2^2  ds  \notag\\
&  \hspace{0.2 in}
+   C  \left(\sup_{z\in [0,1]}  \int_0^t  \left|\bar{w_m {\theta^{(m)}}}\right|^2 (z,s) ds  \right)  \int_0^t     \|\partial_z w_m\|_2^2  ds,
\;\;\;   \text{for all} \,\,   t\in [0,T].
\end{align}

Combining (\ref{H1z-uw}) and (\ref{H1z-t}) provides
\begin{align}  \label{H1z-uwt}
&\|\partial_z w_m(t)\|_2^2 + \| \partial_z \mathbf u_m(t)\|_2^2 +       \| \partial_z {\theta^{(m)}}(t)\|_2^2 + \frac{1}{2Re} \int_0^t \left(\|\nabla_h \partial_z  w_m\|_2^2 + \|\partial_z \omega_m\|_2^2 \right) ds  \notag\\
& \hspace{0.2 in} + \frac{1}{Pe} \int_0^t  \|\nabla_h \partial_z {\theta^{(m)}}\|_2^2 ds   + \epsilon^2  \int_0^t \| \partial_{zz} \phi_m\|_2^2 ds    \notag\\
 &\leq     \|\partial_z  w_m(0)\|_2^2 + \|\partial_z \mathbf u_m(0)\|_2^2   +     C \|\partial_z {\theta^{(m)}}(0)\|_2^2    + C  \int_0^t  \left(\|\Delta_h w_m\|_2^2
+   \| \partial_z \mathbf u_m\|_2^2 +1 \right) \|\partial_z w_m\|_2^2 ds     \notag\\
& \hspace{0.2 in}    +          C      \int_0^t   \|\bar{   |{\theta^{(m)}}|^2}\|_{L^{\infty}}      \|\partial_z w_m\|_2^2  ds
+ C  \left(   \sup_{z\in [0,1]}  \int_0^t  \left|\bar{w_m {\theta^{(m)}}}\right|^2 ds  \right)  \int_0^t     \|\partial_z w_m\|_2^2  ds
\notag\\
& \hspace{0.2 in}+C \int_0^t \left(\|\omega_m\|_2^2  \|\mathbf u_m\|_2^2
+  \|\omega_m\|_2^2  \| \partial_z  \mathbf u_m\|_2^2 + 1\right)   \| \partial_z   \mathbf u_m\|_2^2 ds  \notag\\
& \hspace{0.2 in} +C \int_0^t  \left( \|\Delta_h {\theta^{(m)}}\|_2^2  + \| \partial_z \mathbf u_m\|_2^2 \right)  \|  \partial_z  {\theta^{(m)}}\|_2^2 ds
+ \int_0^t  \| \partial_z  {\theta^{(m-1)}}\|_2^2  ds,
\end{align}
for all $t\in [0,T]$.

Thanks to the Gronwall's inequality, we obtain, for all $t\in [0,T]$, $m\geq 2$,
\begin{align}   \label{H1z-11}
&\|\partial_z w_m(t)\|_2^2 + \| \partial_z \mathbf u_m(t)\|_2^2 +       \| \partial_z {\theta^{(m)}}(t)\|_2^2 + \frac{1}{2Re} \int_0^t \left(\|\nabla_h \partial_z  w_m\|_2^2 + \|\partial_z \omega_m\|_2^2 \right) ds  \notag\\
& \hspace{0.1 in} + \frac{1}{Pe} \int_0^t  \|\nabla_h \partial_z {\theta^{(m)}}\|_2^2 ds   + \epsilon^2  \int_0^t \| \partial_{zz} \phi_m\|_2^2 ds    \notag\\
&\leq \left( \|\partial_z w_m(0)\|_2^2 + \|\partial_z \mathbf u_m(0)\|_2^2 + C\|\partial_z {\theta^{(m)}}(0)\|_2^2   +
\int_0^t    \| \partial_z  {\theta^{(m-1)}}\|_2^2 ds  \right) e^{M(t)},
\end{align}
where
\begin{align}    \label{tildeC}
M(t)&=C\int_0^t  \Big(\|\Delta_h w_m\|_2^2 + \|   \partial_z \mathbf u_m\|_2^2
+ \|\omega_m\|_2^2       \|\mathbf u_m\|_2^2      + \|\omega_m\|_2^2  \|\partial_z \mathbf u_m\|_2^2    \notag\\
&   \hspace{0.5 in}  +  \|\Delta_h {\theta^{(m)}}\|_2^2   +   \|\bar{ | {\theta^{(m)}}|^2} \|_{L^{\infty}}
 + \sup_{z\in [0,1]}  \int_0^t  \left|\bar{w_m {\theta^{(m)}}}\right|^2 d\tau      +1  \Big) ds     \notag\\
&\leq \tilde C(\|\mathbf u_0\|_{H^1}, \|w_0\|_{H^1},   \|\theta_0\|_{H^1} , T),      \;\;\; \text{for all} \,\, t\in [0,T],
\end{align}
owing to estimates (\ref{L2-4}), (\ref{omega}), (\ref{H1w}), (\ref{theta-inff}) and (\ref{theta-nh0}).

In order to show the right-hand side of (\ref{H1z-11}) is uniformly bounded, we must prove that $\int_0^T    \| \partial_z  {\theta^{(m)}}\|_2^2 ds$ is uniformly bounded.
Set  $t_0 = 1/(2 e^{\tilde C}) $, where $\tilde C>0$ is given in (\ref{tildeC}). We first show $\int_0^{t_0}   \| \partial_z  {\theta^{(m)}}\|_2^2 ds  $ is a bounded sequence.
Then we show $\int_{t_0}^{2t_0}   \| \partial_z  {\theta^{(m)}}\|_2^2 ds $ is a bounded sequence. After iterating finitely many times, we will conclude that the sequence $\int_0^T \| \partial_z  {\theta^{(m)}}\|_2^2 ds$ is bounded.

To see $\int_0^{t_0}   \| \partial_z  {\theta^{(m)}}\|_2^2 ds$ is a bounded sequence, we argue by induction. To begin with,
we obtain from (\ref{theta1}) that
\begin{align}   \label{theta1z}
\int_0^t \|\partial_z \theta^{(1)}\|_2^2 ds \leq C\|\partial_z \theta^{(1)}(0)\|_2^2 =  C \|\partial_z \theta_0\|_2^2,   \;\;\; \text{for all}\,\, t\in [0,T].
\end{align}
For an index $m\geq 2$, we consider two cases.
If  $\int_0^{t_0}   \| \partial_z  {\theta^{(m)}}\|_2^2 ds <   \int_0^{t_0}    \| \partial_z  {\theta^{(m-1)}}\|_2^2 ds $,
then the sequence $\int_0^{t_0} \| \partial_z  {\theta^{(m)}}\|_2^2 ds$ decreases at the level $m$.
Conversely, if $\int_0^{t_0} \| \partial_z  {\theta^{(m)}}\|_2^2 ds \geq  \int_0^{t_0} \| \partial_z  {\theta^{(m-1)}}\|_2^2 ds $, then by (\ref{H1z-11})-(\ref{tildeC}), we obtain, for $t\in [0,t_0]$,
\begin{align} \label{t0}
&\|\partial_z w_m(t)\|_2^2 + \| \partial_z \mathbf u_m(t)\|_2^2 + \|\partial_z {\theta^{(m)}}(t)\|_2^2   \notag\\
&\leq  \left( \|\partial_z w_m(0)\|_2^2 + \|\partial_z \mathbf u_m(0)\|_2^2 + C\|\partial_z {\theta^{(m)}}(0)\|_2^2   +
\int_0^{t_0}   \| \partial_z  {\theta^{(m-1)}}\|_2^2 ds  \right) e^{M(t)}  \notag\\
&\leq   e^{\tilde C}   \left( \|\partial_z w_0\|_2^2 + \|\partial_z \mathbf u_0\|_2^2 + C\|\partial_z \theta_0\|_2^2  \right)   +     e^{\tilde C}\int_0^{t_0}   \| \partial_z  \theta^{(m)}\|_2^2 ds      \notag\\
&\leq  C(\|\mathbf u_0\|_{H^1}, \|w_0\|_{H^1},   \|\theta_0\|_{H^1} , T)  +e^{\tilde C}\int_0^{t_0}   \| \partial_z  \theta^{(m)}\|_2^2 ds,
\end{align}
where $\tilde C>0$ is the constant given in (\ref{tildeC}), depending only on $\|\mathbf u_0\|_{H^1}$, $\|w_0\|_{H^1}$, $\|\theta_0\|_{H^1}$ and $T$.

Integrating (\ref{t0}) over $[0,t_0]$ provides
\begin{align}    \label{t0-1}
\int_0^{t_0}   \| \partial_z {\theta^{(m)}}\|_2^2 ds \leq    C(\|\mathbf u_0\|_{H^1}, \|w_0\|_{H^1},   \|\theta_0\|_{H^1} , T)  +  t_0 \cdot e^{\tilde C}\int_0^{t_0}   \| \partial_z  \theta^{(m)}\|_2^2 ds.
\end{align}
Since $t_0 = 1/(2 e^{\tilde C})$, then $t_0 \cdot e^{\tilde C}=\frac{1}{2}$. Thus, we obtain from (\ref{t0-1}) that
\begin{align}    \label{t0-2}
\int_0^{t_0}   \| \partial_z {\theta^{(m)}}\|_2^2 ds  \leq    C(\|\mathbf u_0\|_{H^1}, \|w_0\|_{H^1},   \|\theta_0\|_{H^1} , T).
\end{align}
Since the right-hand side of (\ref{t0-2}) is independent of $m$, by induction, we obtain the sequence $\int_0^{t_0}   \| \partial_z {\theta^{(m)}}\|_2^2 ds$ is bounded by
$C(\|\mathbf u_0\|_{H^1}, \|w_0\|_{H^1},   \|\theta_0\|_{H^1} , T)$. As a result, by (\ref{H1z-11}), we have
\begin{align}    \label{H1z-122}
&\|\partial_z w_m(t)\|_2^2 + \| \partial_z \mathbf u_m(t)\|_2^2 + \|\partial_z {\theta^{(m)}}(t)\|_2^2   \notag\\
&\leq C(\|\mathbf u_0\|_{H^1}, \|w_0\|_{H^1},   \|\theta_0\|_{H^1} , T),    \;\;  \text{for all}\,\,  t\in [0,t_0], \,m\geq 2.
\end{align}

We remark that the constant $C(\|\mathbf u_0\|_{H^1}, \|w_0\|_{H^1},   \|\theta_0\|_{H^1} , T)$ may vary from line to line in our estimates, but it is always independent of $m$. On the other hand, $\tilde C$ is a fixed constant given in (\ref{tildeC}), also independent of $m$.

Next, we show that $\int_{t_0}^{2t_0} \|\partial_z {\theta^{(m)}}\|_2^2 ds$ is a bounded sequence.
Indeed, repeating the same estimates as in (\ref{H1z-1})-(\ref{tildeC}) with the time starting at $t_0$, we have, for all $t\in [t_0,T]$,
\begin{align}   \label{t1}
&\|\partial_z w_m(t)\|_2^2 + \| \partial_z \mathbf u_m(t)\|_2^2 +       \| \partial_z {\theta^{(m)}}(t)\|_2^2    \notag\\
&\leq \left( \|\partial_z w_m(t_0)\|_2^2 + \|\partial_z \mathbf u_m(t_0)\|_2^2 + C\|\partial_z {\theta^{(m)}}(t_0)\|_2^2   +
\int_{t_0}^t    \| \partial_z  {\theta^{(m-1)}}\|_2^2 ds  \right) e^{M_1(t)},
\end{align}
for $m\geq 2$, where
\begin{align}     \label{t2}
M_1(t)&=C\int_{t_0}^t  \big(\|\Delta_h w_m\|_2^2 + \|   \partial_z \mathbf u_m\|_2^2
+ \|\omega_m\|_2^2       \|\mathbf u_m\|_2^2      + \|\omega_m\|_2^2  \|\partial_z \mathbf u_m\|_2^2    \notag\\
&   \hspace{0.5 in}  +  \|\Delta_h {\theta^{(m)}}\|_2^2   +   \|\bar{    |{\theta^{(m)}}|^2} \|_{L^{\infty}}
 + \sup_{z\in [0,1]}  \int_{t_0}^t  \left|\bar{w_m {\theta^{(m)}}} \right|^2 d\tau      +1  \big) ds     \notag\\
&\leq M(t)\leq \tilde C(\|\mathbf u_0\|_{H^1}, \|w_0\|_{H^1},   \|\theta_0\|_{H^1} , T),      \;\;\; \text{for all} \,\, t\in [t_0,T],
\end{align}
due to (\ref{tildeC}).

Now, we can see that $\int_{t_0}^{2t_0} \|\partial_z {\theta^{(m)}}\|_2^2 ds$ is a bounded sequence via induction. Indeed, we should first notice that $\int_{t_0}^{2t_0} \|\partial_z \theta^{(1)}\|_2^2 ds \leq C\|\partial_z \theta_0\|_2^2$ due to (\ref{theta1z}). Then, for an index $m\geq 2$, if $\int_{t_0}^{2t_0} \| \partial_z  {\theta^{(m)}}\|_2^2 ds \geq  \int_{t_0}^{2t_0} \| \partial_z  {\theta^{(m-1)}}\|_2^2 ds$,
 then by (\ref{t1}), we obtain, for $t\in [t_0, 2t_0]$,
 \begin{align}   \label{t3}
&\|\partial_z w_m(t)\|_2^2 + \| \partial_z \mathbf u_m(t)\|_2^2 +       \| \partial_z {\theta^{(m)}}(t)\|_2^2    \notag\\
&\leq \left( \|\partial_z w_m(t_0)\|_2^2 + \|\partial_z \mathbf u_m(t_0)\|_2^2 + C\|\partial_z {\theta^{(m)}}(t_0)\|_2^2   +
\int_{t_0}^{2t_0}   \| \partial_z  {\theta^{(m-1)}}\|_2^2 ds  \right) e^{M_1(t)},  \notag\\
&\leq    \left( C(\|\mathbf u_0\|_{H^1}, \|w_0\|_{H^1},   \|\theta_0\|_{H^1} , T)   +
\int_{t_0}^{2t_0}   \| \partial_z  \theta^{(m)}\|_2^2 ds  \right) e^{\tilde C}   \notag\\
&\leq   C(\|\mathbf u_0\|_{H^1}, \|w_0\|_{H^1},   \|\theta_0\|_{H^1} , T) +  e^{\tilde C}   \int_{t_0}^{2t_0}   \| \partial_z  \theta^{(m)}\|_2^2 ds,
\end{align}
where we have used (\ref{H1z-122}) and (\ref{t2}). Then integrating (\ref{t3}) over $[t_0, 2t_0]$ and using $t_0 \cdot e^{\tilde C}=\frac{1}{2}$, we have
\begin{align}    \label{t4}
\int_{t_0}^{2t_0}   \| \partial_z {\theta^{(m)}}\|_2^2 ds  \leq    C(\|\mathbf u_0\|_{H^1}, \|w_0\|_{H^1},   \|\theta_0\|_{H^1} , T).
\end{align}
Hence, by induction, we see that $\int_{t_0}^{2t_0}   \| \partial_z {\theta^{(m)}}\|_2^2 ds$ is bounded by $C(\|\mathbf u_0\|_{H^1}, \|w_0\|_{H^1},   \|\theta_0\|_{H^1}, T)$ for all $m \geq 2$. Thus, by (\ref{t1}), we obtain, for all $t\in [t_0, 2t_0]$,
\begin{align*}
\|\partial_z w_m(t)\|_2^2 + \| \partial_z \mathbf u_m(t)\|_2^2 + \| \partial_z {\theta^{(m)}}(t)\|_2^2 \leq   C(\|\mathbf u_0\|_{H^1}, \|w_0\|_{H^1},   \|\theta_0\|_{H^1} , T).\end{align*}

After iterating the above procedure on finitely many intervals $[0,t_0]$, $[t_0,2t_0]$, $\cdots$, $[nt_0, T]$, we eventually obtain $\int_0^T   \| \partial_z {\theta^{(m)}}\|_2^2 dt$ is a bounded sequence, namely,
\begin{align}    \label{t5}
\int_0^T   \| \partial_z {\theta^{(m)}}\|_2^2 dt  \leq   C(\|\mathbf u_0\|_{H^1}, \|w_0\|_{H^1},   \|\theta_0\|_{H^1} , T), \;\;\;  \text{for any} \,\, m\in \mathbb N.
\end{align}
By substituting (\ref{t5}) to the right-hand side of (\ref{H1z-11}), we achieve the desired uniform bound
\begin{align}   \label{bdz}
&\|\partial_z w_m(t)\|_2^2 + \| \partial_z \mathbf u_m(t)\|_2^2 +       \| \partial_z {\theta^{(m)}}(t)\|_2^2 + \frac{1}{2Re} \int_0^t \left(\|\nabla_h \partial_z  w_m\|_2^2 + \|\partial_z \omega_m\|_2^2 \right) ds  \notag\\
& \hspace{0.1 in} + \frac{1}{Pe} \int_0^t  \|\nabla_h \partial_z {\theta^{(m)}}\|_2^2 ds   + \epsilon^2  \int_0^t \| \partial_{zz} \phi_m\|_2^2 ds    \notag\\
&\leq C(\|\mathbf u_0\|_{H^1}, \|w_0\|_{H^1},   \|\theta_0\|_{H^1} , T),    \;\;\; \text{for all}\,\, t\in [0,T],  \, m\geq 2.
\end{align}

\vspace{0.1 in}
\subsection{Passage to the limit}
According to all of the estimates which have been established in section \ref{sec-unib} for $\mathbf u_m$, $\omega_m$, $w_m$ and ${\theta^{(m)}}$, we obtain the following uniform bounds:
\begin{align}
&\mathbf u_m,\; w_m, \;{\theta^{(m)}}  \;\;\text{are uniformly bounded in}\;\;  L^{\infty}(0,T; H^1(\Omega));   \label{Gal-2} \\
&\omega_m       \;\;\text{is uniformly bounded in}\;\;  L^{\infty}(0,T; L^2(\Omega));      \label{Gal-1}    \\
&\nabla_h \omega_m, \; \Delta_h w_m, \; \Delta_h {\theta^{(m)}}, \; \bar{w_m {\theta^{(m)}}}\;\;\text{are uniformly bounded in}\;\; L^2(\Omega \times (0,T)); \label{Gal-3} \\
&   \partial_z \omega_m, \; \nabla_h \partial_z w_m, \; \nabla_h \partial_z {\theta^{(m)}}, \;  \partial_{zz}\phi_m \;\;\text{are uniformly bounded in}\;\; L^2(\Omega \times (0,T)). \label{Gal-4}
\end{align}

Therefore, on a subsequence, as $m\rightarrow \infty$,
\begin{align}
&\mathbf u_m \rightarrow \mathbf u, \; w_m \rightarrow w, \; {\theta^{(m)}} \rightarrow \theta  \;\; \text{weakly$^*$ in} \;\; L^{\infty}(0,T;H^1(\Omega));   \label{Gal-5'}  \\
& \omega_m \rightarrow \omega         \;\; \text{weakly$^*$ in} \;\; L^{\infty}(0,T; L^2(\Omega));    \\
&\nabla_h \omega_m \rightarrow \nabla_h \omega,       \;
\Delta_h w_m \rightarrow \Delta_h w, \; \Delta_h {\theta^{(m)}} \rightarrow \Delta_h \theta \;\; \text{weakly in}  \;\; L^2(\Omega \times (0,T));    \label{Gal-5''}  \\
&  \partial_z \omega_m \rightarrow \omega_z,       \;
\nabla_h \partial_z w_m \rightarrow \nabla_h w_z, \; \nabla_h \partial_z {\theta^{(m)}} \rightarrow \nabla_h \theta_z,  \; \partial_{zz}\phi_m \rightarrow \phi_{zz} \;\; \text{weakly in}  \;\; L^2(\Omega \times (0,T)).     \label{Gal-5'''}
\end{align}

In order to use a compactness theorem to obtain certain strong convergence of the approximate solutions, we shall derive uniform bounds independent of $m\geq 2$
for $\partial_t w_m$, $\partial_t \mathbf u_m$, $\partial_t \omega_m$ and $\partial_t {\theta^{(m)}}$. First, we claim that the sequence $\partial_t w_m$ is uniformly bounded in $L^2(\Omega \times (0,T))$. Indeed, for any function $\eta\in L^{4/3}(0,T;L^2(\Omega))$, we use Lemma \ref{lemma1} to estimate
\begin{align}  \label{Gal-5}
&\int_0^T \int_{\Omega} (\mathbf u_m \cdot \nabla_h w_m) \eta  dx dy dz  dt    \notag\\
&\leq C\int_0^T \|\omega_m\|_2^{1/2} \left(\|\mathbf u_m\|_2+ \|\partial_z \mathbf u_m\|_2\right)^{1/2} \|\nabla_h w_m\|_2^{1/2}
\|\Delta_h w_m\|_2^{1/2} \|\eta\|_2 dt  \notag\\
&\leq C\sup_{t\in [0,T]} \Big[\|\omega_m\|_2^{\frac{1}{2}} (\|\mathbf u_m\|_2^{\frac{1}{2}}+ \|\partial_z \mathbf u_m\|_2^{\frac{1}{2}})\|\nabla_h w_m\|_2^{\frac{1}{2}}\Big]\Big(\int_0^T  \|\Delta_h w_m\|_2^2  dt\Big)^{\frac{1}{4}} \Big(\int_0^T \|\eta\|_2^{\frac{4}{3}}dt\Big)^{\frac{3}{4}},
\end{align}
which is uniformly bounded due to (\ref{Gal-2}) and (\ref{Gal-3}).
Consequently, the sequence $\mathbf u_m \cdot \nabla_h w_m$ is bounded in $L^4(0,T;L^2(\Omega))$.
As a result, we obtain from equation (\ref{Gl-1}) that
\begin{align}    \label{Gal-6}
\partial_t w_m \;\;\text{is uniformly bounded in}\;\; L^2(\Omega \times (0,T)).
\end{align}

Next we show that $\partial_t \mathbf u_m $ is bounded in $L^2(\Omega \times (0,T))$. For any function $\tilde{\eta} \in L^2(0,T;H^1_h(\Omega))$, we apply Lemma \ref{lemma1} and Poincar\'e inequality (\ref{poin}) to estimate
\begin{align}     \label{Gal-7}
&\int_0^T \int_{\Omega}  (\mathbf u_m \cdot \nabla_h \omega_m) \tilde{\eta} dx dy dz dt \notag\\
&\leq  C\int_0^T \|\nabla_h \mathbf u_m\|_2^{1/2} \left(\|\mathbf u_m\|_2 + \|\partial_z \mathbf u_m\|_2\right)^{1/2} \|\nabla_h \omega_m\|_2
\|  \tilde{\eta}   \|_2^{1/2}    \left(\|   \tilde{\eta}    \|_2 + \|\nabla_h    \tilde{\eta}     \|_2\right)^{1/2}   dt \notag\\
&\leq C\sup_{t\in[0,T]} \Big[ \|\nabla_h \mathbf u_m\|_2^{\frac{1}{2}} (\|\mathbf u_m\|_2^{\frac{1}{2}} + \|\partial_z\mathbf u_m\|_2^{\frac{1}{2}})\Big]
\Big(\int_0^T \|\nabla_h \omega_m\|_2^2 dt \Big)^{\frac{1}{2}} \Big(\int_0^T \|\tilde{\eta}    \|_2^2 + \|\nabla_h   \tilde{\eta}     \|_2^2  dt\Big)^{\frac{1}{2}}.
\end{align}
Note that (\ref{Gal-2})-(\ref{Gal-3}) provide the uniform bound for the right-hand side of (\ref{Gal-7}). Therefore, the sequence
$\mathbf u_m \cdot \nabla_h \omega_m$ is bounded uniformly in $m$ in $L^2(0,T;(H^1_h(\Omega))')$, where $(H^1_h(\Omega))'$ is the dual space of $H^1_h(\Omega)$.
Consequently, we obtain from the vorticity equation (\ref{Gl-2}) that
\begin{align}
&\partial_t \omega_m  \;\;\text{is uniformly bounded in}\;\;  L^2(0,T;(H^1_h(\Omega))');  \label{Gal-88}\\
&\partial_t \mathbf u_m  \;\;\text{is uniformly bounded in}\;\;  L^2(\Omega \times (0,T))  \label{Gal-8} .
\end{align}

Moreover, $\partial_t {\theta^{(m)}}$ is bounded in $L^2(\Omega \times (0,T))$. Indeed, applying H\"older's inequality, we deduce
\begin{align}  \label{Gal-10}
\int_{\Omega} |w_m\bar{w_m{\theta^{(m)}}}|^2 dx dy dz  &\leq  C\int_0^1 \left(\int_{[0,L]^2} |w_m|^2 dx dy \right)^2  \left(\int_{[0,L]^2} |{\theta^{(m)}}|^2 dx dy \right) dz \notag\\
&\leq C   \int_0^1 \left(\int_{[0,L]^2} |w_m|^6 dx dy \right)^{2/3}  \left(\int_{[0,L]^2} |{\theta^{(m)}}|^6 dx dy \right)^{1/3} dz   \notag\\
&\leq C \|w_m\|_6^4  \|{\theta^{(m)}}\|_6^2    \leq C \|w_m\|_{H^1}^4  \|{\theta^{(m)}}\|_{H^1}^2,
\end{align}
where the last inequality is due to the imbedding $H^1(\Omega) \hookrightarrow L^6(\Omega)$ in three dimensions. Since the $H^1$ norms of $w_m$ and ${\theta^{(m)}}$ are uniformly bounded on $[0,T]$, (\ref{Gal-10}) implies
the sequence $w_m\bar{w_m{\theta^{(m)}}}$ is bounded in $L^{\infty}(0,T;L^2(\Omega))$. Also, using an estimate similar to (\ref{Gal-5}), we can show the sequence
$\mathbf u_m \cdot \nabla_h {\theta^{(m)}}$ is bounded in $L^4(0,T;L^2(\Omega))$. Therefore, we obtain from the temperature equation (\ref{model-3}) that
\begin{align}  \label{Gal-11}
\partial_t {\theta^{(m)}} \;\;\text{is uniformly bounded in}\;\; L^2(\Omega \times (0,T)).
\end{align}

Owing to (\ref{Gal-6}), (\ref{Gal-88})-(\ref{Gal-8}) and (\ref{Gal-11}), on a subsequence,
\begin{align}  \label{Gal-111}
&\partial_t w_m \rightarrow \partial_t w, \;\;  \partial_t \mathbf u_m \rightarrow \partial_t \mathbf u, \;\; \partial_t {\theta^{(m)}} \rightarrow \partial_t \theta
\;\;\; \text{weakly in} \;\; L^2(\Omega \times (0,T)); \\
& \partial_t \omega_m \rightarrow \partial_t \omega   \;\;\;      \text{weakly$^*$ in} \;\; L^2(0,T;(H^1_h(\Omega))').    \label{Gal-112}
\end{align}

By (\ref{Gal-2}), (\ref{Gal-6}), (\ref{Gal-8}), (\ref{Gal-11}), and thanks to the Aubin's compactness theorem, the following strong convergence holds for a subsequence
of $(\mathbf u_m, w_m, {\theta^{(m)}})^{tr}$:
\begin{align}     \label{Gal-12}
\mathbf u_m \rightarrow \mathbf u, \;\; w_m\rightarrow w, \;\; {\theta^{(m)}} \rightarrow \theta   \;\; \text{in} \;\; L^2(\Omega \times (0,T)).
\end{align}
Thus, $\omega_m \rightarrow \omega$ in $L^2(0,T;(H^1_h(\Omega))')$ for this subsequence.

Now we can pass to the limit as $m\rightarrow \infty$ for the nonlinear terms in the Galerkin-like system (\ref{Gl-1})-(\ref{Gl-5}). Let $\psi$ be a trigonometric polynomial with continuous coefficients.
For $m$ larger than the degree of $\psi$, we have
\begin{align}    \label{Gal-13}
&\int_0^T \int_{\Omega} P_m(\mathbf u_m \cdot \nabla_h \omega_m) \psi dx dy dz dt  \notag\\
&= \int_0^T \int_{\Omega} (\mathbf u \cdot \nabla_h \omega_m) \psi dx dy dz dt + \int_0^T \int_{\Omega} [(\mathbf u_m-\mathbf u) \cdot \nabla_h \omega_m] \psi dx dy dz dt.
\end{align}
Since $\nabla_h \omega_m\rightarrow \nabla_h \omega$ weakly in $L^2(\Omega \times (0,T))$, $\mathbf u_m\rightarrow \mathbf u$ in $L^2(\Omega \times (0,T))$,
and $\nabla_h \omega_m$ is bounded in $L^2(\Omega \times (0,T))$, we can pass to the limit in (\ref{Gal-13}) to get
\begin{align}     \label{Gal-14}
\lim_{m\rightarrow \infty}  \int_0^T \int_{\Omega} P_m(\mathbf u_m \cdot \nabla_h \omega_m) \psi dx dy dz dt
=  \int_0^T \int_{\Omega} (\mathbf u \cdot \nabla_h \omega) \psi dx dy dz dt.
\end{align}
Similarly, we can deduce
\begin{align}
&\lim_{m\rightarrow \infty}  \int_0^T \int_{\Omega} P_m(\mathbf u_m \cdot \nabla_h w_m) \psi dx dy dz dt
=  \int_0^T \int_{\Omega} (\mathbf u \cdot \nabla_h w) \psi dx dy dz dt.     \label{Gal-15}\\
& \lim_{m\rightarrow \infty}  \int_0^T \int_{\Omega} (\mathbf u_m \cdot \nabla_h {\theta^{(m)}})  \psi dx dy dz dt
=  \int_0^T \int_{\Omega} (\mathbf u \cdot \nabla_h \theta) \psi dx dy dz dt.    \label{Gal-16}
\end{align}

Furthermore, we consider
\begin{align}        \label{Gal-17}
&\int_0^T \int_{\Omega} \left( w_m \bar{w_m {\theta^{(m)}}} -w \bar{w\theta} \right)\psi dx dy dz dt \notag\\
&= \int_0^T \int_{\Omega} (w_m - w)  \bar{w_m {\theta^{(m)}}}  \psi dx dy dz dt
+ \int_0^T \int_{\Omega} w \bar{(w_m-w) {\theta^{(m)}}} \psi dx dy dz dt    \notag\\
&  \hspace{0.5 in} + \int_0^T \int_{\Omega} w \bar{w ({\theta^{(m)}}-\theta)} \psi dx dy dz dt.
\end{align}
We shall show that each integral on the right-hand side of (\ref{Gal-17}) converges to zero. The convergence to zero for the first integral on the right-hand side of (\ref{Gal-17}) is due to the fact that $w_m\rightarrow w$ in $L^2(\Omega \times (0,T))$ and the uniform boundedness of the sequence $\bar{w_m{\theta^{(m)}}}$ in $L^2(\Omega \times (0,T))$. For the second integral on the right-hand side of (\ref{Gal-17}), we apply Cauchy-Schwarz inequality and Lemma \ref{lemma2} to get
\begin{align*}
&\left|\int_0^T \int_{\Omega} w \bar{(w_m-w) {\theta^{(m)}}} \psi dx dy dz dt \right|\notag\\
&\leq C  \|\psi\|_{L^{\infty}(\Omega \times (0,T))}  \int_0^T \int_0^1  \Big(\int_{[0,L]^2} {  |\theta^{(m)}}|^2 dx dy\Big)^{\frac{1}{2}}  \Big(\int_{[0,L]^2} |w_m - w|^2 dx dy \Big)^{\frac{1}{2}}
\Big(\int_{[0,L]^2} |w| dxdy  \Big) dz dt   \notag\\
&\leq C  \|\psi\|_{L^{\infty}(\Omega\times (0,T))} \sup_{t\in [0,T]}\left(\|{\theta^{(m)}}\|_2+ \|\partial_z {\theta^{(m)}}\|_2\right)
 \|w_m-w\|_{L^2(\Omega \times (0,T))} \|w\|_{L^2(\Omega \times (0,T))} \longrightarrow 0,
\end{align*}
where the convergence to zero is due to the fact that $w_m\rightarrow w$ in $L^2(\Omega \times (0,T))$ and the sequence ${\theta^{(m)}}$ is uniformly bounded in $L^{\infty}(0,T;H^1(\Omega))$.
Next, we look at the last term on the right-hand side of (\ref{Gal-17}):
\begin{align*}
&\left|\int_0^T \int_{\Omega} w \bar{w ({\theta^{(m)}}-\theta)} \psi dx dy dz dt  \right| \notag\\
&\leq   C  \|\psi\|_{L^{\infty}(\Omega \times (0,T))} \int_0^T \int_0^1 \left(\int_{[0,L]^2} w^4 dx dy\right)^{1/2} \left(\int_{[0,L]}|{\theta^{(m)}} - \theta|^2 dx dy\right)^{1/2} dz dt\notag\\
&\leq   C  \|\psi\|_{L^{\infty}(\Omega \times (0,T))}   \sup_{t\in [0,T]}\|w\|_4^2   \|{\theta^{(m)}} - \theta\|_{L^2(\Omega \times (0,T))}\longrightarrow 0,
\end{align*}
where the convergence to zero is due to the fact that ${\theta^{(m)}}\rightarrow \theta$ in $L^2(\Omega \times (0,T))$
and that $w\in L^{\infty}([0,T];H^1(\Omega))$. In sum, all integrals on the right-hand side of (\ref{Gal-17}) converge to zero, and thus
\begin{align}   \label{Gal-21}
\lim_{m\rightarrow \infty}\int_0^T \int_{\Omega} (w_m \bar{w_m {\theta^{(m)}}} )\psi dx dy dz dt =  \int_0^T \int_{\Omega} w \bar{w \theta} \psi dx dy dz dt.
\end{align}

Owing to (\ref{Gal-5'})-(\ref{Gal-5'''}), (\ref{Gal-111})-(\ref{Gal-112}), (\ref{Gal-14})-(\ref{Gal-16}), (\ref{Gal-21}), we can pass to the limit as $m\rightarrow \infty$ for the Galerkin-like system (\ref{Gl-1})-(\ref{Gl-5}) to get
\begin{align}    \label{Gal-22}
\begin{cases}
\int_0^T \int_{\Omega} \left(\partial_t w + \mathbf u \cdot \nabla_h w - \partial_z \phi -\Gamma \theta - \frac{1}{Re} \Delta_h w \right) \psi dx dy dz dt=0,  \\
\int_0^T \int_{\Omega} \left(\partial_t \omega + \mathbf u \cdot \nabla_h \omega - \partial_z w -\frac{1}{Re} \Delta_h \omega  - \epsilon^2 \partial_{zz} \phi \right)\psi dx dy dz dt=0, \\
\int_0^T \int_{\Omega} \left(\partial_t \theta  + \mathbf u \cdot \nabla_h \theta +  w \bar{w\theta} - \frac{1}{Pe} \Delta_h \theta\right) \psi dx dy dz dt=0,\\
\end{cases}
\end{align}
such that $\nabla_h \cdot \mathbf u=0$, for any trigonometric polynomial $\psi$ with continuous coefficients.

By using estimates similar to (\ref{Gal-5}) and (\ref{Gal-7}), one has $\mathbf u \cdot \nabla_h w$, $\mathbf u \cdot \nabla_h \theta \in L^4(0,T;L^2(\Omega))$ and $\mathbf u \cdot \nabla_h \omega \in L^2(0,T;(H^1_h(\Omega))')$ . Also, $w\bar{w\theta}\in L^{\infty}(0,T;L^2(\Omega))$ due to an estimate like (\ref{Gal-10}).
Hence, we obtain from (\ref{Gal-22}) that
equations (\ref{model-1})-(\ref{model-3}) hold in the sense of (\ref{main-1}). By simply integrating (\ref{main-1}) in time, we see that $w, \mathbf u, \theta \in C([0,T];L^2(\Omega))$ and $\omega \in C([0,T];(H^1_h(\Omega))')$.
Then, it is easy to verify the initial condition.
Also, by (\ref{mean0}) and (\ref{Gal-12}), we find that $\bar{\mathbf u}=0$, $\bar w =0$ and $\bar \theta=0$ for all $t\in [0,T]$.
Finally, due to the regularity of solutions, we can multiply (\ref{main-1}) by $(w,\phi,\theta)^{tr}$ and integrate the result over $\Omega \times [0,t]$ for $t\in [0,T]$ to obtain the energy identity (\ref{energy}). This completes the proof for the global existence of strong solutions for system (\ref{model-1})-(\ref{model-4}).

\vspace{0.1 in}

\section{Uniqueness of strong solutions and continuous dependence on initial data}  \label{sec-unique}
This section is devoted to proving that the strong solutions for system (\ref{model-1})-(\ref{model-4}) are unique and depend continuously on initial data.
Since a strong solution has the regularity specified in (\ref{main-2}), all calculations in this section are legitimate.
Assume there are two strong solutions $(\mathbf u_1,w_1,\theta_1)^{tr}$ and $(\mathbf u_2,w_2,\theta_2)^{tr}$ for system (\ref{model-1})-(\ref{model-4}).
Let $\omega_1 = \nabla_h \times \mathbf u_1$ and $\omega_2 = \nabla_h \times \mathbf u_2$.
Set $\mathbf u=\mathbf u_1-\mathbf u_2$, $\omega=\omega_1-\omega_2$, $w=w_1-w_2$, $\theta=\theta_1-\theta_2$. Therefore, for a.e. $t\in [0,T]$,
\begin{align}     \label{unique}
\begin{cases}
w_t + \mathbf u \cdot \nabla_h w_1+ \mathbf u_2 \cdot \nabla_h w - \phi_z= \Gamma \theta + \frac{1}{Re} \Delta_h w,  \;\;  \text{in}\; L^2(\Omega),\\
\omega_t + \mathbf u \cdot \nabla_h \omega_1 + \mathbf u_2 \cdot \nabla_h \omega - w_z = \frac{1}{Re} \Delta_h \omega  + \epsilon^2 \phi_{zz},
\;\;\text{in}\; (H^1_h(\Omega))',\\
\theta_t  + \mathbf u \cdot \nabla_h \theta_1 +  \mathbf u_2 \cdot \nabla_h \theta +  w \bar{w_1\theta_1} + w_2 \bar{w\theta_1} +   w_2 \bar{w_2\theta}
= \frac{1}{Pe} \Delta_h \theta,    \;\;\text{in} \; L^2(\Omega),
\end{cases}
\end{align}
and $\nabla_h \cdot \mathbf u =0$.

We multiply (\ref{unique}) by $(w,\phi,\theta)^{tr}$ and integrate over $\Omega$. By using (\ref{iden-0}), (\ref{iden-1}), (\ref{iden-3}) and (\ref{iden-2}), we obtain, for a.e. $t\in [0,T]$,
\begin{align}   \label{unique-1}
&\frac{1}{2}\frac{d}{dt}\left(\|w\|_2^2 + \|\mathbf u\|_2^2   +  \|\theta\|_2^2 \right)
+ \frac{1}{Re}\left(\|\nabla_h w\|_2^2 + \|\nabla_h \mathbf u\|_2^2\right)
+\frac{1}{Pe} \|\nabla_h \theta\|_2^2 + \epsilon^2\|\phi_z\|_2^2
+ \|\bar{w_2 \theta}\|_2^2   \notag\\
&\leq \int_{\Omega} |(\mathbf u \cdot \nabla_h w) w_1| dx dy dz
+\int_{\Omega} |(\mathbf u_2 \cdot \nabla_h \phi) \omega| dx dy dz +
\int_{\Omega} |(\mathbf u \cdot \nabla_h \theta) \theta_1| dx dy dz  \notag\\
&\hspace{0.3 in} + \frac{\Gamma}{2}\left(\|\theta\|_2^2 + \|w\|_2^2\right) + \int_{\Omega} |w \bar{w_1\theta_1} \theta| dx dy dz
-\int_{\Omega} w_2 \bar{w\theta_1} \theta dx dy dz.
\end{align}
Now we estimate each term on the right-hand side of (\ref{unique-1}).

Using Lemma \ref{lemma1} with $f=w_1$, $g=\mathbf u$ and $h=\nabla_h w$, and by Poincar\'e inequality (\ref{poin}), we obtain
\begin{align}    \label{unique-2}
&\int_{\Omega} |(\mathbf u \cdot \nabla_h w) w_1| dx dy dz   \notag\\
&\leq C \|\nabla_h w_1\|_2^{1/2}  \left(\|w_1\|_2 + \|\partial_z w_1\|_2\right)^{1/2}
\|\mathbf u\|_2^{1/2} \|\nabla_h \mathbf u\|_2^{1/2} \|\nabla_h w\|_2 \notag\\
&\leq \frac{1}{6Re} \left(\|\nabla_h w\|_2^2 + \|\nabla_h \mathbf u\|_2^2\right)+
C\|\nabla_h w_1\|_2^2   \left(\|w_1\|_2^2 + \|\partial_z w_1\|_2^2\right) \|\mathbf u\|_2^2.
\end{align}
Also, using Lemma \ref{lemma1} with $f=\mathbf u_2$, $g=\nabla_h \phi$, $h=\omega$, and by Poincar\'e inequality (\ref{poin}), we have
\begin{align}  \label{unique-3}
\int_{\Omega} |(\mathbf u_2 \cdot \nabla_h \phi) \omega| dx dy dz
&\leq C    \|\omega_2\|_2^{1/2}        (\|\mathbf u_2\|_2+ \|\partial_z \mathbf u_2\|_2)^{1/2} \|\mathbf u\|_2^{1/2} \|\nabla_h \mathbf u\|_2^{3/2}  \notag\\
&\leq  \frac{1}{6Re} \|\nabla_h \mathbf u\|_2^2 +   C    \|\omega_2\|_2^2      (\|\mathbf u_2\|_2^2+ \|\partial_z \mathbf u_2\|_2^2)\|\mathbf u\|_2^2.
\end{align}
Moreover, by Lemma \ref{lemma1} with $f=\theta_1$, $g=\mathbf u$, $h=\nabla_h \theta$, and Poincar\'e inequality (\ref{poin}), one has
\begin{align}    \label{unique-4}
&\int_{\Omega} |(\mathbf u \cdot \nabla_h \theta) \theta_1| dx dy dz  \notag\\
&\leq  \|\nabla_h \theta_1\|_2^{1/2}     \left(\|\theta_1\|_2  + \|\partial_z \theta_1\|_2 \right)^{1/2}
\|\mathbf u\|_2^{1/2}    \|\nabla_h \mathbf u\|_2^{1/2}   \|\nabla_h \theta\|_2  \notag\\
&\leq \frac{1}{6Re} \|\nabla_h \mathbf u\|_2^2 + \frac{1}{2Pe} \|\nabla_h \theta\|_2^2 +
C  \|\nabla_h \theta_1\|_2^2    \left(\|\theta_1\|_2^2  + \|\partial_z \theta_1\|_2^2 \right) \|\mathbf u\|_2^2.
\end{align}

Using Cauchy-Schwarz inequality and Lemma \ref{lemma2}, we get
\begin{align}  \label{unique-5}
\int_{\Omega} |w \bar{w_1 \theta_1} \theta| dx dy dz
&\leq  C\|w\|_2 \|\theta\|_2   \sup_{z\in [0,1]}   \Big[\Big(\int_{[0,L]^2} w_1^2 dx dy\Big)^{1/2} \Big(\int_{[0,L]^2} \theta_1^2 dx dy\Big)^{1/2} \Big] \notag\\
&\leq    C\|w\|_2 \|\theta\|_2     (\|w_1\|_2+\|\partial_z w_1\|_2)   (\|\theta_1\|_2+\|\partial_z \theta_1\|_2)  \notag\\
&\leq C (\|w_1\|_2^2+\|\partial_z w_1\|_2^2)  \|w\|_2^2 +   (\|\theta_1\|_2^2+\|\partial_z \theta_1\|_2^2)    \|\theta\|_2^2.
\end{align}
Again, applying Cauchy-Schwarz inequality and Lemma \ref{lemma2}, we obtain
\begin{align}    \label{unique-6}
-\int_{\Omega} w_2 \bar{w\theta_1} \theta dx dy dz
&\leq \int_0^1 \Big(\int_{[0,L]^2} w^2 dx dy\Big)^{1/2}   \Big(\int_{[0,L]^2} \theta_1^2 dx dy\Big)^{1/2}   \left|\bar{w_2 \theta}\right| dz  \notag\\
&\leq  C   (\|\theta_1\|_2+\|\partial_z \theta_1\|_2)   \|w\|_2 \|\bar{w_2 \theta}\|_2    \notag\\
&\leq  \|\bar{w_2 \theta}\|_2^2 + C(\|\theta_1\|_2^2+\|\partial_z \theta_1\|_2^2) \|w\|_2^2.
\end{align}

Now, we combine estimates (\ref{unique-1})-(\ref{unique-6}) to deduce, for a.e. $t\in [0,T]$,
\begin{align*}
&\frac{d}{dt}\left(\|w\|_2^2 + \|\mathbf u\|_2^2   +  \|\theta\|_2^2 \right)
+ \frac{1}{Re}\left(\|\nabla_h w\|_2^2 + \|\nabla_h \mathbf u\|_2^2\right)
+\frac{1}{Pe} \|\nabla_h \theta\|_2^2 + \epsilon^2\|\phi_z\|_2^2  \notag\\
&\leq  C  \Big[\|\nabla_h w_1\|_2^2  \left(\|w_1\|_2^2 + \|\partial_z w_1\|_2^2\right)+\|\omega_2\|_2^2   (\|\mathbf u_2\|_2^2+ \|\partial_z \mathbf u_2\|_2^2)
+\|\nabla_h \theta_1\|_2^2    \left(\|\theta_1\|_2^2  + \|\partial_z \theta_1\|_2^2 \right) \Big]\|\mathbf u\|_2^2   \notag\\
 &\hspace{0.2 in}+ C\left( \|w_1\|_2^2+\|\partial_z w_1\|_2^2
+\|\theta_1\|_2^2+\|\partial_z \theta_1\|_2^2 +1\right) \|w\|_2^2
+ (\|\theta_1\|_2^2+\|\partial_z \theta_1\|_2^2 +1 )   \|\theta\|_2^2.
\end{align*}
Thanks to the Gronwall's inequality, we have, for all $t\in [0,T]$,
\begin{align}   \label{unique-7}
\|w(t)\|_2^2 + \|\mathbf u(t)\|_2^2   +  \|\theta(t)\|_2^2
\leq  \left(\|w(0)\|_2^2 + \|\mathbf u(0)\|_2^2   +  \|\theta(0)\|_2^2\right)  e^{K(t)}
\end{align}
where
\begin{align*}
&K(t) = C \int_0^t   \Big[\left(\|\nabla_h w_1\|_2^2 +1 \right)  \left(\|w_1\|_2^2 + \|\partial_z w_1\|_2^2  \right)+\|\omega_2\|_2^2   (\|\mathbf u_2\|_2^2+ \|\partial_z \mathbf u_2\|_2^2)  \\
&   \hspace{0.5 in}+(\|\nabla_h \theta_1\|_2^2+1)    \left(\|\theta_1\|_2^2  + \|\partial_z \theta_1\|_2^2 \right) +1  \Big] ds.
\end{align*}
Since strong solutions $(\mathbf u_1,w_1,\theta_1)^{tr}$ and $(\mathbf u_2,w_2,\theta_2)^{tr}$ are in the space $L^{\infty}(0,T;(H^1(\Omega))^4)$, $K(t)$ is bounded on $[0,T]$. Therefore, (\ref{unique-7}) implies the uniqueness of strong solutions. Furthermore, (\ref{unique-7}) also implies the continuous dependence on initial data for strong solutions, namely,
if $\{(\mathbf u_0^n,w_0^n, \theta_0^n)^{tr}\}$ is a bounded sequence of initial data in $H^1(\Omega)$
such that $(\mathbf u_0^n,w_0^n,\theta_0^n)^{tr} \rightarrow (\mathbf u_0,w_0,\theta_0)^{tr}$ with respect to the $L^2(\Omega)$ norm, then the corresponding strong solutions $(\mathbf u^n,w^n,\theta^n)^{tr} \rightarrow (\mathbf u,w,\theta)^{tr}$ in $C([0,T];(L^2(\Omega))^4)$.

\vspace{0.1 in}

\section{Large-time behavior}  \label{sec-large}
In this section, we prove Theorem \ref{thm-decay}: the asymptotic behavior of strong solutions as $t\rightarrow \infty$.
Since a strong solution has the regularity specified in (\ref{main-2}), all calculations in this section are legitimate.

First we show the exponential decay estimates (\ref{thm2-00})-(\ref{thm2-1}).
Taking the inner product of (\ref{model-3}) with $\theta$ gives
\begin{align}  \label{theta-la}
\frac{1}{2}\frac{d}{dt}\|\theta\|_2^2   +  \frac{1}{Pe} \|\nabla_h \theta\|_2^2   + \|\bar{w\theta}\|_2^2  =0.
\end{align}

According to the Poincar\'e inequality $\|\theta\|_2^2  \leq \gamma \|\nabla_h \theta\|_2^2 $ and estimate (\ref{theta-la}), we obtain
\begin{align}    \label{large-1}
\frac{1}{2}\frac{d}{dt} \|\theta\|_2^2   +  \frac{1}{\gamma Pe} \|\theta\|_2^2  \leq 0.
\end{align}
It follows that
\begin{align}    \label{large-2}
\|\theta(t)\|_2^2  \leq e^{-\frac{2}{\gamma Pe}t} \|\theta_0\|_2^2,   \;\;\;  \text{for all}\;\;   t\geq 0.
\end{align}
Then, integrating (\ref{theta-la}) over $[t,t+1]$ gives
\begin{align}   \label{large-2'}
\int_t^{t+1} \left(  \frac{1}{Pe}    \|\nabla_h \theta\|_2^2  + \|\bar{w\theta}\|_2^2\right)  ds
\leq \frac{1}{2}\|\theta(t)\|_2^2 \leq  \frac{1}{2} e^{-\frac{2}{\gamma Pe}t} \|\theta_0\|_2^2,  \;\;\;  \text{for all}\;\;   t\geq 0.\end{align}

Taking the $L^2(\Omega)$ inner product of equations (\ref{model-1})-(\ref{model-2}) with $(w,\phi)^{tr}$, we deduce
\begin{align}    \label{wu-la}
\frac{d}{dt} \left(\|w\|_2^2 + \|\mathbf u\|_2^2 \right) +\frac{1}{Re} \left(\|\nabla_h w\|_2^2
+ \|\nabla_h \mathbf u\|_2^2 \right)  + \epsilon^2 \|\partial_z \phi\|_2^2  \leq C \|\theta\|_2^2.
\end{align}
Integrating (\ref{wu-la}) over $[0,t]$ gives
\begin{align}   \label{lar-1}
&\|w(t)\|_2^2 + \|\mathbf u(t)\|_2^2 +
\int_0^t  \left(\frac{1}{Re} \left(\|\nabla_h w\|_2^2
+ \|\nabla_h \mathbf u\|_2^2 \right)  + \epsilon^2 \|\partial_z \phi\|_2^2 \right) ds  \notag\\
&\leq      \|w_0\|_2^2 + \|\mathbf u_0\|_2^2  + C \|\theta_0\|_2^2,  \;\;\; \text{for all} \;\;t\geq 0,
 \end{align}
by virtue of (\ref{large-2}).

Applying the Poincar\'e inequality to (\ref{wu-la}), we have
\begin{align}  \label{large-3}
\frac{d}{dt} \left( \|w\|_2^2  + \|\mathbf u\|_2^2 \right) + \frac{1}{\kappa \gamma  Re} \left( \|w\|_2^2  + \|\mathbf u\|_2^2 \right) \leq C \|\theta\|_2^2,
\end{align}
for any $\kappa \geq 1$.
From (\ref{large-3}) and (\ref{large-2}), we obtain
\begin{align}   \label{large-4}
\frac{d}{dt}\left(e^{\frac{1}{\kappa \gamma Re}t}\left(\|w\|^2_2 +  \|\mathbf u\|_2^2 \right)    \right)
\leq C e^{\frac{1}{\kappa \gamma Re} t}  \|\theta\|_2^2   \leq Ce^{ \left(\frac{1}{\kappa \gamma Re} -\frac{2}{\gamma Pe} \right)  t}  \|\theta_0\|_2^2.
\end{align}
We can choose $\kappa \geq 1$ such that $Pe \not = 2\kappa Re$. Then integrating (\ref{large-4}) over $[0,t]$ implies
\begin{align}   \label{large-5}
\|w(t)\|_2^2 +  \|\mathbf u(t)\|_2^2  \leq    e^{-\frac{1}{\kappa \gamma Re}t}  \left( \|w_0\|_2^2  +  \|\mathbf u_0\|_2^2 \right)
+C   \left( e^{-\frac{2}{\gamma Pe}t}   +  e^{-\frac{1}{\kappa \gamma Re}t} \right) \|\theta_0\|_2^2,
\end{align}
for all $t\geq 0$.
Integrating (\ref{wu-la}) from $t$ to $t+1$ shows
\begin{align}   \label{large-6}
&\int_t^{t+1} \left(\frac{1}{Re}\left(\|\nabla_h w\|_2^2 + \|\nabla_h \mathbf u\|_2^2  \right) + \epsilon^2 \|\phi_z\|^2_2  \right)
ds    \leq  \|w(t)\|_2^2 +  \|\mathbf u(t)\|_2^2   +  C\int_t^{t+1} \|\theta\|_2^2 ds   \notag\\
&\leq  e^{-\frac{1}{\kappa \gamma Re}t}     \left(\|w_0\|_2^2  + \|\mathbf u_0\|_2^2 \right) +  C\left(e^{-\frac{2}{\gamma Pe}t} + e^{-\frac{1}{\kappa \gamma Re}t} \right) \|\theta_0\|_2^2, \;\;\;  \text{for all}\,\, t\geq 0,
\end{align}
where the last inequality is due to (\ref{large-2}) and (\ref{large-5}).

Next, we take the inner product of (\ref{model-2}) with $\omega$, and adopt the same calculation as in (\ref{omega-1})-(\ref{omega-3}) to derive
\begin{align}   \label{omega-la}
\frac{d}{dt} \|\omega\|_2^2
+ \frac{2}{Re}\|\nabla_h \omega\|_2^2 + \epsilon^2 \|\mathbf u_z\|_2^2
\leq \frac{1}{\epsilon^2} \|\nabla_h w\|_2^2.
\end{align}
Integrating (\ref{omega-la}) over $[0,t]$ and using (\ref{lar-1}), we obtain
\begin{align}  \label{lar-2}
\|\omega(t)\|_2^2 +  \int_0^t  \left(\frac{2}{Re}\|\nabla_h \omega\|_2^2 + \epsilon^2 \|\mathbf u_z\|_2^2 \right) ds  \leq  \|\omega_0\|_2^2
+  C (\|w_0\|_2^2 + \|\mathbf u_0\|_2^2  + \|\theta_0\|_2^2),
\end{align}
for all $t\geq 0$.

Now, we integrate (\ref{omega-la}) over $[s,t+1]$ for $t\leq s\leq t+1$, and then integrate the result from $t$ to $t+1$ to get
\begin{align}   \label{large-7}
\|\omega(t+1)\|_2^2 & \leq \int_t^{t+1} \|\omega\|_2^2 ds  +  \frac{1}{\epsilon^2} \int_t^{t+1} \|\nabla_h w\|_2^2 ds  \notag\\
&\leq  C e^{-\frac{1}{\kappa \gamma Re}t}     \left(\|w_0\|_2^2  + \|\mathbf u_0\|_2^2 \right) +  C\left(e^{-\frac{2}{\gamma Pe}t} + e^{-\frac{1}{\kappa \gamma Re}t} \right) \|\theta_0\|_2^2,
\end{align}
for all $t\geq 0$, by using (\ref{large-6}).

Then, integrating (\ref{omega-la}) over $[t,t+1]$ shows
\begin{align}   \label{large-8}
&\int_t^{t+1}  \left(\frac{2}{Re} \|\nabla_h \omega\|_2^2 +  \epsilon^2 \|\mathbf u_z\|_2^2\right) ds
\leq \|\omega(t)\|_2^2  +  \frac{1}{\epsilon^2}\int_t^{t+1} \|\nabla_h w\|_2^2 ds   \notag\\
&\leq C e^{-\frac{1}{\kappa \gamma Re}t}     \left(\|w_0\|_2^2  + \|\mathbf u_0\|_2^2 \right) +  C\left(e^{-\frac{2}{\gamma Pe}t} + e^{-\frac{1}{\kappa \gamma Re}t} \right) \|\theta_0\|_2^2,      \;\;\;  \text{for all}\,\, t\geq 1,
\end{align}
where the last inequality is due to (\ref{large-6}) and (\ref{large-7}).

Furthermore, taking the inner product of (\ref{model-1}) with $-\Delta_h w$ and using the calculations in (\ref{H1w-00})-(\ref{H1w}) yield
\begin{align}     \label{nablaw-la}
\frac{d}{dt}\|\nabla_h w\|_2^2 +\frac{1}{Re} \|\Delta_h w\|_2^2   \leq C\|\omega\|_2^2
(\|\mathbf u\|_2^2 + \|\mathbf u_z\|_2^2) \|\nabla_h w\|_2^2 + C\left(\|\phi_z\|_2^2
+ \|\theta\|_2^2\right),
\end{align}
and
\begin{align}   \label{lar-3}
\|\nabla_h w(t)\|_2^2 +  \frac{1}{Re} \int_0^t  \|\Delta_h w\|_2^2 ds
\leq C (\|\nabla_h w_0\|_2, \|\omega_0\|_2, \|\theta_0\|_2), \;\;  \text{for all}\,\, t\geq 0.
\end{align}

Applying the uniform Gronwall Lemma (see Lemma \ref{gronwall}) to (\ref{nablaw-la}), we obtain
\begin{align}      \label{large-9}
\|\nabla_h w(t+1)\|_2^2
&\leq e^{\int_t^{t+1} C \|\omega\|_2^2 (\|\mathbf u\|_2^2 + \|\mathbf u_z\|_2^2)  ds  }
\left( \int_t^{t+1}  \|\nabla_h w\|_2^2 ds +   C\int_t^{t+1}  \left( \|\phi_z\|_2^2  +  \|\theta\|_2^2   \right)ds      \right)   \notag\\
&\leq   \left(e^{-\frac{2}{\gamma Pe}t} + e^{-\frac{1}{\kappa \gamma Re}t} \right)  C(\|\mathbf u_0\|_2,  \|w_0\|_2,  \|\theta_0\|_2),
\;\;\;  \text{for all}\,\, t\geq 1,
\end{align}
where we use (\ref{large-2}), (\ref{large-5})-(\ref{large-6}) and (\ref{large-7})-(\ref{large-8}). Then, we integrate (\ref{nablaw-la}) over $[t,t+1]$ to get
\begin{align}    \label{large-10}
\frac{1}{Re}\int_t^{t+1} \|\Delta_h w\|_2^2 ds
&\leq   \|\nabla_h w(t)\|_2^2
+ C\int_t^{t+1} \left[\|\omega\|_2^2(\|\mathbf u\|_2^2 + \|\mathbf u_z\|_2^2) \|\nabla_h w\|_2^2 + \|\phi_z\|_2^2 + \|\theta\|_2^2 \right] ds  \notag\\
&\leq    \left(e^{-\frac{2}{\gamma Pe}t} + e^{-\frac{1}{\kappa \gamma Re}t} \right)  C(\|\mathbf u_0\|_2,  \|w_0\|_2,  \|\theta_0\|_2),
\;\;\;  \text{for all}\,\, t\geq 2,
\end{align}
due to (\ref{large-2}) and (\ref{large-5})-(\ref{large-6}), (\ref{large-7})-(\ref{large-8}) and (\ref{large-9}).

Next, we multiply (\ref{model-3}) by $2\theta$ and integrate the result over $[0,L]^2 \times [0,t]$, then we obtain
\begin{align}   \label{theta-inf1}
\frac{1}{2}\bar{\theta^2}(z,t)+\int_0^t  \left[\frac{1}{Pe} \bar{|\nabla_h \theta|^2}(z) +   \left|\bar{w\theta}\right|^2 (z) \right] ds = \frac{1}{2}\bar{\theta_0^2}(z)
\leq C\left( \|\theta_0\|_2^2 +  \|\partial_z \theta_0\|_2^2 \right),
\end{align}
for all $t\geq 0$, and for a.e. $z\in [0,1]$, where we use Lemma \ref{lemma2} to obtain the last inequality.

Also, taking the inner product of (\ref{model-3}) with $-\Delta_h \theta$ and using the same calculation as in (\ref{H1ta-f1})-(\ref{theta-nh0}), we obtain
\begin{align} \label{H1ta-00}
&\frac{d}{dt} \|\nabla_h \theta\|_2^2 + \frac{1}{Pe} \|\Delta_h \theta\|_2^2   \notag\\
&\leq C \|\nabla_h \mathbf u\|_2^2  \left(\|\mathbf u\|_2^2+\|\mathbf u_z\|_2^2 \right) \|\nabla_h \theta\|_2^2
 + \|\bar{w\theta}\|_2^2    \|\bar{\theta^2}\|_{L^{\infty}} +      \|\Delta_h w\|_2^2,
 \end{align}
 and
 \begin{align}   \label{lar-4}
 \|\nabla_h \theta(t)\|_2^2 + \frac{1}{Pe} \int_0^t  \|\Delta_h \theta\|_2^2 ds
 \leq  C(\|\theta_0\|_{H^1}, \|\nabla_h w_0\|_2, \|\omega_0\|_2), \;\; \text{for all} \,\, t\geq 0.
 \end{align}

Then, applying the uniform Gronwall Lemma on (\ref{H1ta-00}), we deduce
\begin{align*}
\|\nabla_h \theta(t+1)\|_2^2
&\leq e^{\int_t^{t+1} C \|\omega\|_2^2 (\|\mathbf u\|_2^2 +  \|\mathbf u_z\|_2^2) ds}
\int_t^{t+1}\left( \|\nabla_h \theta\|_2^2 +  \|\Delta_h w\|_2^2 +  \|\bar{w\theta}\|_2^2 \|\bar{\theta^2}\|_{L^{\infty}} \right) ds\notag\\
&\leq \left(e^{-\frac{2}{\gamma Pe}t} + e^{-\frac{1}{\kappa \gamma Re}t} \right)  C(\|\mathbf u_0\|_2,  \|w_0\|_2,  \|\theta_0\|_2, \|\partial_z \theta_0\|_2), \;\; \text{for all} \,\, t\geq 2,
\end{align*}
where we have used (\ref{large-2'}), (\ref{large-5}), (\ref{large-7})-(\ref{large-8}), (\ref{large-10}) and (\ref{theta-inf1}).

It remains to show that $\|\mathbf u_z\|_2^2 + \|w_z\|_2^2  +  \|\theta_z\|_2^2$ grows at most exponentially in time. Indeed, performing similar calculations as in section \ref{sec-z}, we obtain
\begin{align}     \label{end-1}
\|\partial_z w(t)\|_2^2 + \| \partial_z \mathbf u(t)\|_2^2 +       \| \partial_z \theta(t)\|_2^2  \leq \left( \|\partial_z w_0\|_2^2 + \| \partial_z \mathbf u_0\|_2^2 + C\| \partial_z \theta_0 \|_2^2   \right) e^{\mathcal M(t)},
\end{align}
where
\begin{align}   \label{end-2}
\mathcal  M(t)&=C\int_0^t  \Big(\|\Delta_h w\|_2^2 + \|\mathbf u_z\|_2^2  + \|\omega\|_2^2       \|\mathbf u\|_2^2      + \|\omega\|_2^2  \|\mathbf u_z\|_2^2    \notag\\
&   \hspace{0.5 in}  +  \|\Delta_h \theta\|_2^2   +   \|\bar{\theta^2}\|_{L^{\infty}}
 + \sup_{z\in [0,1]}  \int_0^t  \left|\bar{w \theta}\right|^2 d\tau  +1  \Big) ds     \notag\\
&\leq    C(\|\omega_0\|_2, \|\nabla_h w_0\|_2, \|\theta_0\|_{H^1}) +  C\left(\|\theta_0\|_2^2+  \|\partial_z \theta_0\|_2^2  +1 \right)t,
 \end{align}
for all $t\geq 0$, where we use (\ref{lar-1}), (\ref{lar-2}), (\ref{lar-3}), (\ref{theta-inf1}) and (\ref{lar-4}) to obtain the last inequality. Notice that (\ref{end-1})-(\ref{end-2}) implies (\ref{thm2-2}).
The proof for Theorem \ref{thm-decay} is complete.

\vspace{0.2 in}

\noindent {\bf Acknowledgment.} The work of E.S.T. was  supported in part by the Einstein Stiftung/Foundation - Berlin, through the Einstein Visiting Fellow Program and by the John Simon Guggenheim Memorial Foundation.

\vspace{0.1 in}

\bibliographystyle{amsplain}

\end{document}